\documentclass[12pt]{amsart}

\usepackage{amsmath,amssymb,amsxtra,anysize,times,tikz, float,combelow}
\marginsize{1in}{1in}{1in}{1in}

\newtheorem{lemma}{Lemma}[section]
\newtheorem{theorem}[lemma]{Theorem}%[section]
\newtheorem{conjecture}[lemma]{Conjecture}%[section]
\newtheorem{corollary}[lemma]{Corollary}%[section]
\newtheorem{proposition}[lemma]{Proposition}%[section]
\theoremstyle{definition}   %\MV{seeing how it looks with this style}
\newtheorem{example}[lemma]{Example}%[section]
\newtheorem{remark}[lemma]{Remark}%[section]
\newtheorem{definition}[lemma]{Definition}%[section] 
 
\DeclareMathOperator{\An}{\mathcal{A}} %generalized Andersen map
\DeclareMathOperator{\Core}{Core}
\DeclareMathOperator{\Quot}{Quot}
\DeclareMathOperator{\dinv}{dinv}

\DeclareMathOperator{\hgt}{ht}
\DeclareMathOperator{\PF}{\mathcal{PF}}
\DeclareMathOperator{\area}{area}
\DeclareMathOperator{\ch}{ch}
\DeclareMathOperator{\SSRT}{SSRT}
\DeclareMathOperator{\SRT}{SRT}

\DeclareMathOperator{\spin}{spin}
\DeclareMathOperator{\ides}{ides}
\DeclareMathOperator{\ias}{ias}

\DeclareMathOperator{\des}{des}
\DeclareMathOperator{\SSYT}{SSYT}
\DeclareMathOperator{\SYT}{SYT}
\DeclareMathOperator{\st}{st}

\newcommand{\Sn}{S_n}
\newcommand{\aSmn}{\widetilde{S}_n^m} %m-stable part of AFFINE symm group
 %m-restricted part of AFFINE symm groupi

\newcommand{\BC}{\mathbb{C}}
\newcommand{\BZ}{\mathbb{Z}}
\newcommand{\aSn}{\widetilde{S}_n}
\newcommand{\CF}{\mathcal{F}}
\newcommand{\om}{\omega}

\title{Rational parking functions and LLT polynomials}
\author{Eugene Gorsky}
\address{Department of Mathematics, Columbia University, 2990 Broadway, New York NY 10027}
\address{Department of Mathematics, UC Davis, One Shields Avenue, Davis CA 95616}
\address{International Laboratory of Representation Theory and Mathematical Physics, NRU-HSE, Vavilova 7, Moscow, Russia}
\email{egorsky@math.columbia.edu}
\author{Mikhail Mazin}
\address{Mathematics Department, Kansas State University, 138 Cardwell Hall,
Manhattan, KS 66506}
\email{mmazin@math.ksu.edu}

\begin{document}
\begin{abstract}
We prove that the combinatorial side of the ``Rational Shuffle Conjecture'' provides a Schur-positive symmetric polynomial.
Furthermore, we prove that the contribution of a given rational Dyck path can be computed as a certain skew LLT polynomial, thus generalizing the result of Haglund, Haiman, Loehr, Remmel and Ulyanov. The corresponding skew diagram is described explicitly in terms of a certain $(m,n)$--core.
\end{abstract}

\maketitle

\section{Introduction}

The space of diagonal coinvariants $DH_n$ is defined as the quotient of the polynomial ring $\BC[x_1,\ldots,x_n,y_1,\ldots,y_n]$ with respect to the ideal generated by the positive degree invariants of the diagonal action of the symmetric group $S_n$. This space is naturally
bigraded by degrees in $x$- and in $y$-variables, and carries a degree preserving $S_n$-action. In the series of papers \cite{haiman1,haiman2,haiman3,haiman4} Haiman proved that the dimension of $DH_n$ equals $(n+1)^{n-1}$
and its bigraded Frobenius character  equals $\nabla e_n$, where $\nabla$ is a certain operator on symmetric functions which diagonalizes in the basis of modified Macdonald polynomials.

Finding an explicit basis in $DH_n$ remains an important open problem in algebraic combinatorics. Recall that a {\em parking function} on $n$ cars is a map 
$f:\{1,\ldots,n\}\to \BZ_{\ge 0}$ such that $\sharp f^{-1}([0,i-1])\ge i$ for all $i$. Let $\PF_n$ denote the set of parking functions. It is well known that its cardinality equals
$\sharp \PF_n=(n+1)^{n-1}=\dim DH_n.$ 
%so one may seek a combinatorial formula for the Frobenius character of $DH_n$ in the form of the sum over parking functions. 
Moreover, $S_n$ acts on $\PF_n$ by permuting the values, and this action preserves a natural statistic on parking functions: $\area(f):=\frac{n(n-1)}{2}-\sum f(i).$ It follows from the work of Garsia and Haiman (\cite{GH96}) that if one forgets one of the gradings on $DH_n,$ then $DH_n$ is isomorphic to the space $\BC\PF_n,$ tensored by the sign representation, as a graded $S_n$-module.
In \cite{HHLRU}, Haiman, Haglund, Loehr, Remmel and Ulyanov proposed a conjectural formula for the bigraded Frobenius characteristic of $DH_n$, which became known as {\em Shuffle Conjecture:}

\begin{conjecture}(\cite{HHLRU})
The following equation holds:
\begin{equation}
\label{shuffle}
\ch DH_n=\nabla e_n=\sum_{f\in \PF_n}q^{\area(f)}t^{\dinv(f)}\cdot Q_{\ias(f)}(z),
\end{equation}
where $\dinv$ is a certain statistic on parking functions, $\ias(f)$ is the set of ascents of the inverse of the diagonal word of $f,$ and $Q_{\ias(f)}(z)$ is the Gessel fundamental quasisymmetric function in variables $\{z_1,z_2,\ldots \}$ (see \cite{G}).
\end{conjecture}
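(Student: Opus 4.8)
The plan is to prove the sharper \emph{Compositional Shuffle Conjecture} of Haglund, Morse, and Zabrocki, which refines \eqref{shuffle} by recording the composition $\alpha=(\alpha_1,\ldots,\alpha_k)\models n$ formed by the return points of the Dyck path underlying a parking function. Writing $C_{a}$ for their modified Hall--Littlewood-type operators on symmetric functions, the target identity is
\[
\nabla\, C_{\alpha_1}C_{\alpha_2}\cdots C_{\alpha_k}(1)\;=\;\sum_{f}\, q^{\area(f)}\, t^{\dinv(f)}\, Q_{\ias(f)}(z),
\]
the sum being over parking functions whose Dyck path touches the diagonal exactly according to $\alpha$. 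Since $\sum_{\alpha\models n} C_{\alpha_1}\cdots C_{\alpha_k}(1)=e_n$, summing over all compositions of $n$ recovers $\nabla e_n=\sum_{f\in\PF_n}q^{\area(f)}t^{\dinv(f)}Q_{\ias(f)}(z)$, and combining this with Haiman's theorem $\ch DH_n=\nabla e_n$ (which we may assume) gives \eqref{shuffle} in full.

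First I would set up the algebraic framework: a quadratic ``Dyck path algebra'' $\mathbb{A}$ generated by Hecke generators $T_1,\ldots,T_{k-1}$ together with a raising operator $d_+$ and a lowering operator $d_-$, acting on the graded space $V_\bullet=\bigoplus_{k\ge 0}\Lambda[x_1,\ldots,x_k]$ of symmetric functions adjoined finitely many commuting variables. Inside this representation each $C_a$ is realized as an explicit word in $d_+$, $d_-$ and the $T_i$, so that the left-hand side $\nabla C_{\alpha_1}\cdots C_{\alpha_k}(1)$ becomes the image under $\nabla$ of a distinguished vector built by applying these generators; the compatibility of $\nabla$ with the generators (it conjugates them in a controlled, essentially diagonal way on Macdonald eigenvectors) lets one transport the problem to a statement purely internal to $V_\bullet$.

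Next I would produce a parallel recursion on the combinatorial side. Decomposing a parking function according to its first return to the diagonal — or, equivalently, peeling off one column of the Dyck path at a time while tracking the relevant car labels — expresses $\sum_f q^{\area(f)}t^{\dinv(f)}Q_{\ias(f)}(z)$ over paths with touch composition $\alpha$ in terms of the same generating function for shorter compositions, with explicit $q$- and $t$-shifts and an explicit quasisymmetric substitution governed exactly by the way $\area$, $\dinv$ and $\ias$ transform under the decomposition. The structure theory of \cite{HHLRU}, together with the LLT expansion established for a single Dyck path in this paper, provides the bookkeeping needed to see that $\dinv$ changes by a predictable linear amount and that $Q_{\ias(f)}$ evolves by the operations dual to multiplication/comultiplication of fundamental quasisymmetric functions.

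The heart of the argument, and the step I expect to be the main obstacle, is to show that these two recursions coincide: that the combinatorial ``peel one column'' operators on quasisymmetric generating functions are intertwined, under the Frobenius correspondence $\ch$, with the operators $d_+$, $d_-$ and $T_i$ on $V_\bullet$. Concretely this reduces to verifying a short list of ladder and commutation identities in $\mathbb{A}$ — in particular the relation expressing $d_+T_i$ through $T_id_+$ plus lower-order corrections, and the compatibility of $d_-$ with the Hecke action — and then checking that the $(q,t)$-statistics move under the combinatorial decomposition precisely as those identities dictate. With the base case $k=1$ in hand (a single column, where the claim is the classical expansion of $\nabla C_n(1)$ into a positive sum of $Q$'s) and these algebra identities verified, both sides of the boxed identity satisfy the same recursion with the same initial data, hence are equal, and \eqref{shuffle} follows.
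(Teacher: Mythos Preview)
There is nothing to compare against: the paper does not prove this statement. It is recorded as a \emph{Conjecture} (the Shuffle Conjecture of \cite{HHLRU}), and the paper's actual contribution is the much weaker Theorem~\ref{thm:main}, namely that for each rational Dyck path $D$ the polynomial $\CF(D;t)$ is a skew LLT polynomial and hence Schur-positive. No attempt is made here to identify the combinatorial side with $\nabla e_n$.

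What you have sketched is, in outline, the Carlsson--Mellit proof of the Compositional Shuffle Conjecture via the Dyck path algebra $\mathbb{A}_{q,t}$. That is indeed the route by which \eqref{shuffle} was eventually established, but it is an entirely separate and substantially harder theorem than anything in this paper, and several of your steps are not as routine as the sketch suggests. In particular: (i) the claim that ``$\nabla$ conjugates the generators in a controlled, essentially diagonal way'' hides the main difficulty --- one must construct operators $d_+^*$, $d_-$ on $V_\bullet$ and prove the full list of relations of $\mathbb{A}_{q,t}$, including the key relation involving $y_1 = -q^{-1}[d_+,d_-]$, and then show that $\nabla$ intertwines the two natural $\mathbb{A}_{q,t}$-module structures; (ii) the combinatorial recursion you describe is not the column-peeling of \cite{HHLRU} but a genuinely new ``zeta/characteristic function'' recursion on labeled paths, and matching it to the algebra relations is the technical core of Carlsson--Mellit; (iii) the LLT identification proved in this paper plays no role in that argument --- it gives Schur-positivity of each summand, not the recursion you need. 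So your plan names the right objects but underestimates the work: the ``short list of ladder and commutation identities'' is in fact the entire content of a long paper, and nothing in the present article supplies those identities.
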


%More recently, the following ``rational'' version of parking functions has been defined (\cite{arms,GMV}):

%\begin{definition}
%A function $f:\{1,\ldots,m\}\to \{1,\ldots,n\}$ is called an $m/n$--parking function if for all $i$ the following inequality holds:
%$\sharp f^{-1}(\{1,\ldots,i\})\ge \frac{m}{n}i$. The set of rational parking functions will be denoted by $\PF_{m/n}$. 

%Given $f\in \PF_{m/n}$ one can consider a partition obtained by sorting of $f(i)$. The corresponding Young diagram defines a lattice path in the $m\times n$ rectangle below the diagonal, which will be called the $m/n$--Dyck path underlying $f$.
%\end{definition}

\begin{remark}
In the original formulas in \cite{HHLRU} one has the set of descents of the diagonal word $\ides(f)$ instead of the ascents $\ias(f).$ This is due to a difference in notations for parking functions (see Figure \ref{figure: parking function} for an example).
\end{remark}

Tensoring by the sign representation corresponds to the involution $\Omega$ on the space of symmetric functions defined by $\Omega(s_\lambda)=s_{\lambda'},$ where $\lambda'$ is obtained from $\lambda$ by transposition. Involution $\Omega$ can also be extended to the space of quasisymmetric functions by setting $\Omega(Q_S)=Q_{\overline{S}},$ where $\overline{S}=\{1,\ldots,n-1\}\setminus S.$
In particular, $\Omega(Q_{\ias(f)})=Q_{\ides(f)}.$

In \cite{gn}, a rational analogue of the Shuffle Conjecture has been proposed.

\begin{conjecture}
The following equation holds:
\begin{equation}
\label{rat shuffle}
P_{m,n}\cdot 1=\sum_{f\in \PF_{m/n}}q^{\area(f)}t^{\dinv(f)}\cdot Q_{\ides(f)}(z),
\end{equation}
where $P_{m,n}$ is a certain degree $n$ operator acting on symmetric functions, $\PF_{m/n}$ is a rational analog of the parking functions (see definition below), and $\area$ and $\dinv$ generalize the corresponding statistics to rational parking functions.
\end{conjecture}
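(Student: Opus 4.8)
To approach this conjecture, the plan is to establish the identity by expanding both sides as explicit sums indexed by $(m,n)$-Dyck paths (equivalently, by $(m,n)$-cores) and matching the two sums term by term; this ``compositional'' refinement is stronger than the bare identity, but it is far more rigid and hence easier to pin down. Since both sides are symmetric functions of degree $n$, it suffices to compare their Schur coefficients. As a structural guide, note that the left-hand side is $q,t$-symmetric --- this follows from the algebra automorphism of the elliptic Hall algebra interchanging the two central parameters --- so any combinatorial decomposition of the right-hand side must ultimately exhibit the same symmetry, a useful consistency check given that $\area$ and $\dinv$ look very different.

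For the right-hand side, group the rational parking functions by their underlying $(m,n)$-Dyck path: writing $D(f)$ for the path supporting $f$, the sum becomes $\sum_{D}\Big(\sum_{f:\,D(f)=D}q^{\area(f)}t^{\dinv(f)}Q_{\ides(f)}(z)\Big)$. By the main theorem of this paper, the inner sum equals, up to an explicit monomial in $q,t$, a single skew LLT polynomial $G_{\nu(D)/\mu(D)}[X;q,t]$ whose skew shape $\nu(D)/\mu(D)$ is read off from the $(m,n)$-core attached to $D$. Thus the right-hand side is $\sum_{D}c_D^{\mathrm{comb}}(q,t)\,G_{\nu(D)/\mu(D)}[X;q,t]$; in particular it is symmetric and, by the Schur-positivity of LLT polynomials, Schur-positive, with Schur coefficients controlled by the spin statistic on the relevant semistandard tableaux.

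For the left-hand side, I would use the shuffle-algebra presentation of the elliptic Hall algebra (Schiffmann--Vasserot, Negut; see also \cite{gn}): in it $P_{m,n}$ acts on $1$ through an iterated residue of a product $\prod_{i<j}\zeta(z_i/z_j)$ against a monomial kernel of slope $m/n$. Evaluating the residue, the relevant poles are indexed precisely by the lattice paths weakly below the segment from $(0,0)$ to $(n,m)$, i.e.\ by $(m,n)$-Dyck paths $D$, and each pole contributes an explicit product of Hall--Littlewood-type factors, yielding $P_{m,n}\cdot 1=\sum_{D}c_D^{\mathrm{alg}}(q,t)\,H_D[X;q,t]$. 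Alternatively one can build $P_{m,n}$ recursively along the Stern--Brocot tree from commutators $[P_{m_1,n_1},P_{m_2,n_2}]$ of Farey neighbours, producing a mediant recursion for $P_{m,n}\cdot 1$; one then sets up a parallel recursion for the combinatorial sum $\sum_{f\in\PF_{m/n}}$ and for the skew LLT polynomials $G_{\nu(D)/\mu(D)}$ via their known recursions, checking that the base case $m=1$ is exactly the theorem of Haglund, Haiman, Loehr, Remmel and Ulyanov \cite{HHLRU}.

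The hard part is the path-by-path identification $c_D^{\mathrm{alg}}H_D=c_D^{\mathrm{comb}}G_{\nu(D)/\mu(D)}$, which is precisely the compositional form of the rational shuffle conjecture. This requires simultaneously controlling (i) the bijection between the poles of the shuffle integral and $(m,n)$-Dyck paths, (ii) a dictionary turning the resulting Hall--Littlewood factors into the skew-shape language of LLT polynomials, and (iii) an exact accounting of how the pair $(\area,\dinv)$ transforms under the $(m,n)$-core correspondence, so that the monomial prefactors $c_D^{\mathrm{alg}}$ and $c_D^{\mathrm{comb}}$ coincide. If a direct recursion matching both sides proves elusive, the fallback is the geometric realization of $P_{m,n}\cdot 1$ as (a specialization of) the equivariant $K$-theory class of the compactified Jacobian of the plane curve singularity $x^m=y^n$, or equivalently of the associated affine Springer fiber, whose bigraded Frobenius character is expected to be computed by the same $\dinv$-weighted sum over $\PF_{m/n}$ (a Hikita-type statement); matching that computation with our skew-LLT expansion would then close the argument.
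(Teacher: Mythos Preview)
The statement you are attempting to prove is stated in the paper as a \emph{Conjecture}, not a theorem; the paper gives no proof of it. What the paper actually establishes is Theorem~\ref{thm:main}: the combinatorial right-hand side, grouped by Dyck paths, equals (up to a monomial shift) a skew LLT polynomial and is therefore Schur-positive. That is only one half of the dictionary you want, and the paper makes no claim about the algebraic left-hand side $P_{m,n}\cdot 1$.

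Your proposal is not a proof but a survey of strategies, and the gap is exactly where you locate ``the hard part.'' You reduce the conjecture to its \emph{compositional} refinement (the path-by-path identity $c_D^{\mathrm{alg}}H_D=c_D^{\mathrm{comb}}G_{\nu(D)/\mu(D)}$), which is strictly stronger than what you set out to prove, and then do not prove that refinement either: items (i)--(iii) are a to-do list, not an argument. The recursive plan along the Stern--Brocot tree presupposes a matching recursion on the combinatorial side that you do not supply; the base case $m=1$ is not the theorem of \cite{HHLRU} (that is the $m=n+1$ case, and in any event \cite{HHLRU} only rewrites the combinatorial side, it does not compute $\nabla e_n$). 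Finally, the geometric fallback is phrased as ``expected'' and ``a Hikita-type statement,'' which again defers the content rather than supplying it.

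In short: there is nothing to compare your attempt with, because the paper does not prove this conjecture; and your outline, while it names the right circle of ideas, leaves the actual identity unproved at precisely the step that carries all the weight.
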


It is also conjectured (see \cite{gn} for a detailed exposition) that \eqref{rat shuffle} is equal to the bigraded Frobenius character of the unique finite-dimensional
representation $L_{m/n}$ of the rational Cherednik algebra. In particular, both sides of this equation are expected to have nonnegative coefficients in the Schur expansion.

In this article we prove that the right hand side of \eqref{rat shuffle} is indeed Schur--positive. More precisely, let $\PF_{m/n}(D)$ denote the set of $m/n$--parking functions with
the underlying Dyck path $D$. Note that the $\area$ statistic is constant on $\PF_{m/n}(D)$.

\begin{theorem}
\label{thm:main}
For all $m/n$--Dyck paths $D$ the polynomial
$$
\CF(D;t):=\sum_{f\in \PF_{m/n}(D)}t^{\dinv(f)}\cdot Q_{\ides(f)}(z)
$$
is a symmetric Schur positive polynomial.
\end{theorem}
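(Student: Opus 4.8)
The plan is to realize $\CF(D;t)$, up to an overall power of $t$, as a skew LLT polynomial, and then to invoke the theorem of Grojnowski and Haiman (see also the work of Lascoux--Leclerc--Thibon, Leclerc--Thibon, and Assaf) that every skew LLT polynomial is a symmetric, Schur positive function. Recall that to an ordered tuple $\beta=(\beta^{(1)},\dots,\beta^{(k)})$ of skew Young diagrams one associates the LLT polynomial $\operatorname{LLT}_{\beta}(z;t)=\sum_{T}t^{\operatorname{inv}(T)}z^{T}$, where $T$ ranges over semistandard fillings of $\beta$ and $\operatorname{inv}(T)$ counts the attacking inversions between entries lying in adjacent diagonals of consecutive components; its Schur positivity is a deep but established fact. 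So it is enough to produce such a tuple $\beta(D)$ together with a bijection between $\PF_{m/n}(D)$ and the semistandard fillings of $\beta(D)$ that carries the relevant statistics across.

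First I would attach to the $m/n$--Dyck path $D$ its Anderson $(m,n)$--core $\kappa=\kappa(D)$, via the classical bijection between $m/n$--Dyck paths and partitions that are simultaneously $m$-- and $n$--cores. Reading the lattice cells lying between $D$ and the diagonal along the lines of constant rank --- equivalently, reading the boxes of $\kappa$ grouped by content modulo the appropriate period --- produces an ordered tuple of ribbons, hence of skew shapes, which I call $\beta(D)$. The point of passing through the $(m,n)$--core is that its abacus/bead presentation is precisely what certifies that these ribbons assemble into a genuine tuple of skew diagrams and that the total number of their cells equals $n$.

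Next I would transport fillings. An element $f\in\PF_{m/n}(D)$ is a placement of the car labels in the cells under $D$ satisfying the rational column--strictness (parking) condition; moving the labels along the rank reading above turns $f$ into a filling $T_f$ of $\beta(D)$, and one checks that the parking condition is exactly semistandardness of $T_f$. It then remains to verify two compatibilities: (i) $\dinv(f)=\operatorname{inv}(T_f)+c(D)$ for some constant $c(D)$ depending only on $D$; and (ii) $\ides(f)$ equals the descent set of the standardized reading word of $T_f$. Granting these, summing $Q_{\ides(f)}$ over all $f\in\PF_{m/n}(D)$ with a fixed underlying content reproduces, by Gessel's standardization identity for quasisymmetric functions, the monomial expansion of $\operatorname{LLT}_{\beta(D)}(z;t)$, so that $\CF(D;t)=t^{-c(D)}\operatorname{LLT}_{\beta(D)}(z;t)$, and symmetry and Schur positivity follow at once.

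The main obstacle is compatibility (i). The statistic $\dinv$ on rational parking functions is defined by counting pairs of cells under $D$ whose arm and leg obey a rank inequality and whose car labels compare in a prescribed way, whereas $\operatorname{inv}$ on an LLT tuple counts attacking pairs across consecutive components. Matching the two requires a precise dictionary between the rank function on the cells under $D$ and the content--and--component data of $\beta(D)$ coming from $\kappa$: one must show that a pair of cells is $\dinv$--relevant exactly when the corresponding boxes of $\beta(D)$ attack, and that the two counts then agree up to the global shift $c(D)$. This is the rational counterpart of the somewhat delicate matching in Haglund--Haiman--Loehr--Remmel--Ulyanov, and the $(m,n)$--core is the bookkeeping device that makes it uniform across all Dyck paths $D$.
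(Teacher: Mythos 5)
Your overall target is the right one — the paper indeed realizes $\CF(D;t)$, up to a power of $t$, as a skew LLT polynomial attached to a ribbon shape built from Anderson's $(m,n)$--core, and then invokes Schur positivity of LLT polynomials via the parabolic affine Kazhdan--Lusztig interpretation. But your proposal leaves unproven exactly the point where all the real work lies, and you say so yourself: ``The main obstacle is compatibility (i).'' That is not an obstacle the paper sidesteps; it is the theorem.

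The paper's resolution goes through a structure you never mention: the bijection $\An:\aSmn\to\PF_{m/n}$ between $m$--stable affine permutations and rational parking functions. The statistic $\dinv$ is defined on the affine permutation side, and the paper produces a bijection $T:\aSmn(\Delta_D)\to\SRT(\nu,m)$ by reading an affine permutation $\om$ as a prescription for the order in which the $n$--generators of $\Delta_D$ jump down by $m$; the target partition $\mu$ is defined explicitly by $M(\mu)=\{a_1-m,\dots,a_n-m\}\cup(\Delta_D+n)$, and $\nu=\mu\setminus\lambda$. Once this is set up, the key identity $\delta-\dinv(\om)=\tfrac{1}{2}\bigl(\spin T(\om)+\spin T(\om_0)\bigr)$ is proved by partitioning the elements jumped over into three explicit classes and comparing the resulting counts with diagonal inversions. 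Without the affine permutation model it is not clear how you would even begin to compare $\dinv$ (defined by rank inequalities on cells under $D$) with a spin or attacking-inversion count on your $\beta(D)$; you gesture at a ``precise dictionary'' but do not construct it, and the $(m,n)$--core alone does not give you one.

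Two smaller points. First, your $\beta(D)$ is vague: you describe it as ``an ordered tuple of ribbons, hence of skew shapes,'' which blurs the distinction between the ribbon-tableau model of an LLT polynomial (what the paper uses, and where the relevant statistic is $\spin$) and the tuple-of-skew-shapes model (where it is $\operatorname{inv}$). These are equivalent via the $m$--quotient, but your proposal switches between them without committing, and compatibility (i) is stated in the $\operatorname{inv}$ language while your construction sounds like the ribbon language. Second, the descent-set compatibility (ii) is also not free: the paper gets it by showing $\des(\om^{-1})=\des(c(T(\om)))$, i.e. by matching descents of the inverse affine permutation with descents of the content sequence of the ribbon tableau, and then applying standardization (Corollary quasisym via semistand). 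Your sketch correctly names Gessel standardization as the mechanism, but again the verification runs through the affine permutation, which is absent from your argument. In short: right destination, but the route you propose has the hard bridge missing, and the paper's actual bridge (affine permutations and the jump/spin lemma) is not something your proposal could recover without essentially starting over.
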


\begin{corollary}
The combinatorial side of the ``rational Shuffle Conjecture'' equals 
$$
\CF_{m/n}(q,t)=\sum_{D}q^{\area(D)}\CF(D;t)
$$ 
and hence is symmetric and Schur positive.
\end{corollary}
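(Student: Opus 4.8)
The Corollary is essentially a bookkeeping consequence of Theorem~\ref{thm:main}, so the plan is short: reorganize the combinatorial side of \eqref{rat shuffle} according to the underlying Dyck path, and then invoke the theorem term by term. By definition the combinatorial side of the rational Shuffle Conjecture is $\sum_{f\in\PF_{m/n}}q^{\area(f)}t^{\dinv(f)}Q_{\ides(f)}(z)$. Every $m/n$--parking function $f$ has a well-defined underlying $m/n$--Dyck path, and this partitions $\PF_{m/n}$ into the finitely many blocks $\PF_{m/n}(D)$. I would split the sum along these blocks; since (as recorded just before the theorem) $\area(f)$ is constant on each block, equal to some $\area(D)$, the factor $q^{\area(f)}$ comes out of each inner sum, giving
$$
\sum_{f\in\PF_{m/n}}q^{\area(f)}t^{\dinv(f)}Q_{\ides(f)}(z)=\sum_{D}q^{\area(D)}\!\!\sum_{f\in\PF_{m/n}(D)}\!\!t^{\dinv(f)}Q_{\ides(f)}(z)=\sum_{D}q^{\area(D)}\,\CF(D;t),
$$
which is exactly the claimed identity $\CF_{m/n}(q,t)=\sum_D q^{\area(D)}\CF(D;t)$. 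For the positivity assertion, Theorem~\ref{thm:main} gives that each $\CF(D;t)$ is a symmetric function whose Schur coefficients all lie in $\mathbb{Z}_{\ge 0}[t]$; scaling by the monomial $q^{\area(D)}$ moves those coefficients into $\mathbb{Z}_{\ge 0}[q,t]$, and a \emph{finite} sum of symmetric functions of this kind is again symmetric with Schur coefficients in $\mathbb{Z}_{\ge 0}[q,t]$. Hence $\CF_{m/n}(q,t)$ is symmetric and Schur positive.

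All the content is therefore in Theorem~\ref{thm:main}, where I would follow the strategy announced in the abstract: realize $\CF(D;t)$ as a skew LLT polynomial. The steps, in order: (i) attach to the $m/n$--Dyck path $D$ an $(m,n)$--core $\kappa(D)$ via Anderson's bijection between rational Dyck paths and simultaneous cores; (ii) read off from $\kappa(D)$ an explicit skew shape $\Theta(D)$ --- equivalently, via the $n$--quotient, a tuple of $n$ skew shapes $\nu^{(0)},\dots,\nu^{(n-1)}$ --- with the cells organized so that the diagonals of $\Theta(D)$ record the position of $D$ relative to the line of slope $m/n$; (iii) construct a bijection between $\PF_{m/n}(D)$ and the standard $n$--ribbon tableaux of $\Theta(D)$ (equivalently the standard fillings of the tuple) under which $\dinv(f)$ becomes the LLT inversion/spin statistic and $\ides(f)$ becomes the descent set of the tableau; (iv) feed this into the expansion of an LLT polynomial in Gessel quasisymmetric functions to conclude $\CF(D;t)=\mathcal{G}_{\Theta(D)}(z;t)$. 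Symmetry is then the Lascoux--Leclerc--Thibon theorem and Schur positivity the Grojnowski--Haiman theorem, so Theorem~\ref{thm:main} follows.

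The step I expect to be the main obstacle is (iii), the statistic matching. The rational $\dinv$ is defined through the geometry of $D$ relative to the line of slope $m/n$ (in terms of arm and leg lengths of the cells above the path), whereas the LLT statistic counts pairs of cells in distinct diagonal classes ordered by content; forcing these to agree pins down the skew shape $\Theta(D)$ of step (ii) very rigidly, and one must simultaneously check that the descent set of the $n$--ribbon tableau reproduces $\ides(f)$ and not its complement --- i.e.\ keep careful track of the $\Omega$--twist that exchanges $Q_S$ with $Q_{\overline S}$ and $\ias$ with $\ides$. The $(m,n)$--core is the natural device here precisely because it carries at once the hook-length (arm/leg) data governing $\dinv$ and the content data governing the LLT inversions, so the bijection of step (iii) is best understood as a dictionary between these two descriptions of one and the same combinatorial object.
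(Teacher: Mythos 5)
Your first paragraph is the actual proof of the corollary, and it matches the paper's (implicit) argument exactly: partition $\PF_{m/n}$ into the blocks $\PF_{m/n}(D)$, use that $\area(f)$ is constant on each block to pull out $q^{\area(D)}$, and invoke Theorem~\ref{thm:main} on each $\CF(D;t)$; a finite sum of symmetric functions with Schur coefficients in $\BZ_{\ge 0}[q,t]$ is again symmetric and Schur positive. That is all the corollary requires, and it is correct as written.

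Your subsequent sketch of Theorem~\ref{thm:main} (which you rightly identify as carrying all the content) is broadly aligned with the paper's strategy but differs in two details. First, the paper works with \emph{$m$-ribbons}, not $n$-ribbons: the skew shape $\nu=\mu\setminus\lambda$ attached to $D$ consists of exactly $n$ ribbons each of length $m$ (one for each $n$-generator of $\Delta_D$, each jumping $m$ steps to the left in the Maya diagram), so the bijection is $T:\aSmn(\Delta)\to\SRT(\nu,m)$ to standard $m$-ribbon tableaux and the relevant quotient is the $m$-quotient. Second, the paper obtains Schur positivity not by citing the LLT/Grojnowski--Haiman positivity result directly, but by identifying the Schur coefficients of $t^{-e(D)}\CF(D;t)$ with parabolic affine Kazhdan--Lusztig polynomials $P^-_{\mu+\rho,\lambda+m\kappa+\rho}(t)$ via \cite[Proposition 5.3.1]{HHLRU}, and then invoking the positivity theorems of Kashiwara--Tanisaki and Shan. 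Your concern in step (iii) about tracking the $\Omega$-twist between $\ias$ and $\ides$ is well placed and is in fact addressed in the paper's remarks about transposed conventions. None of these discrepancies affects the correctness of your proof of the corollary itself.
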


Theorem \ref{thm:main} was first proved in \cite[Corollary 4.16]{Hikita} using the geometry of affine Springer fibers.
Our proof of Theorem \ref{thm:main} follows the ideas of \cite{HHLRU}: we prove that the coefficients of $\CF(D;t)$
in the Schur expansion (up to a monomial shift)  can be identified with certain parabolic affine Kazhdan--Lusztig polynomials
labeled by a certain partition $\mu$ and its $m$-core $\lambda$. We use a bijection of J. Anderson to give a simple construction 
of $\lambda$ and $\mu$ which seems to clarify the subtle combinatorial considerations of \cite{HHLRU}.

\section*{Acknowledgments}

We would like to thank Fran\c{c}ois Bergeron, Andrei Negu\cb t and Monica Vazirani for the useful discussions.  
The research of E.G. was partially supported by the grants DMS-1403560 and RFBR-13-01-00755. % and by the Government of the Russian Federation 
%within the framework of the implementation of the 5-100 Program Roadmap of the National Research University Higher School of Economics.

\section{Definitions and notations}

All Young diagrams are in "French notation".  A set $M\subset \BZ$ is called $n$--invariant if for each $x\in M$ one has $x+n\in M$. 
%For a subset $M\subset \BZ$ we define its conductor $\cond M$ as an interval of the form $[x,+\infty)\subset M$ with minimal possible $x$ (if there is no such $x$, we define $\cond M=\emptyset$). 
%Following \cite{HHLRU}, we denote the set of semistandard Young tableaux of shape $\lambda$ by $\SSYT(\lambda)$ and the set of semistandard $m$-ribbon  tableaux of shape $\lambda$ by $\SSRT_m(\lambda)$.

\subsection{Rational parking functions}

For a function $f:\{1,\ldots,n\}\to \BZ_{\ge 0},$ let $D_f$ denote the Young diagram with the row lengths equal to the values of $f$ put in decreasing order.

\begin{definition}
A function $f$ is called an {\em $m/n$--parking function} if the diagram $D_f$ fits under the diagonal in an $n\times m$ rectangle. The set of $m/n$--parking function is denoted $\PF_{m/n}.$ 
\end{definition}

Equivalently, $f\in\PF_{m/n}$ if and only if for all $1\le i\le m$ the following inequality holds:

$$
\sharp f^{-1}(\{0,\ldots,i-1\})\ge \frac{n}{m}i.
$$

\begin{definition}
The Young diagram $D_f$ will be called the $m/n$--Dyck path underlying $f$. For any $m/n$--Dyck path $D,$ let $\PF_{m/n}(D)\subset\PF_{m/n}$ denote the set of $m/n$-parking functions with the underlying Dyck path $D.$
\end{definition}

\begin{remark}
Note that a Young diagram fits under the diagonal in an $n\times (n+1)$ rectangle if and only if it fits under the diagonal in an $n\times n$ square. Thus, the case $m=n+1$ corresponds to classical parking functions $\PF_{(n+1)/n}=\PF_n.$
\end{remark}

Another way to think about parking functions is to identify the set $\PF_{m/n}(D)$ with the set of standard Young tableaux of the skew shape $D+(1)^n\setminus D$ denoted $\SYT(D+(1)^n\setminus D).$ To recover the function from such a tableau one sets $f(i)$ equal to the length of the row containing the label $i.$ The monotonicity condition for columns insures that we account for each parking function exactly once. It will be convenient for us to assume that the labels decrease in columns from bottom to top %. The right way to switch to the increasing notation is to switch the labels according to the rule $i\mapsto n+1-i$ 
(see Figure \ref{figure: parking function} for an example).

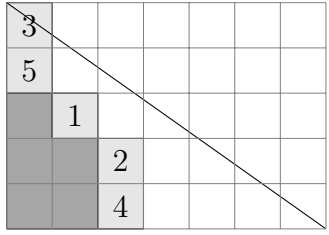
\begin{figure}
\begin{tikzpicture}[scale=0.6]

\filldraw  [fill=gray!70!white](0,0)--(0,3)--(1,3)--(1,2)--(2,2)--(2,0)--(0,0); 
\filldraw  [fill=gray!20!white](0,3)--(0,5)--(1,5)--(1,3)--(0,3); 
\filldraw  [fill=gray!20!white](1,3)--(2,3)--(2,2)--(1,2)--(1,3); 
\filldraw  [fill=gray!20!white](2,0)--(2,2)--(3,2)--(3,0)--(2,0); 
%\draw  [thick] (0,3)--(0,5)--(1,5)--(1,3)--(2,3)--(2,2)--(3,2)--(3,0)--(2,0);
\draw  (0,5)--(7,0); 
\draw [step=1, color=gray, very thin] (0,0) grid (7,5);

\draw (2.5,0.5) node {$4$};
\draw (2.5,1.5) node {$2$};
\draw (1.5,2.5) node {$1$};
\draw (0.5,3.5) node {$5$};
\draw (0.5,4.5) node {$3$};

\end{tikzpicture}
\caption{Consider the function $f:\{1,2,3,4,5\}\to \BZ_{\ge 0}$ given by $f(1)=1,\ f(2)=2,\ f(3)=0,\ f(4)=2,\ f(5)=0.$ The underlying diagram $D_f=\{2,2,1\}$ fits under the diagonal in a $5\times 7$ rectangle, so $f\in\PF_{7/5}$. The figure shows the corresponding standard Young tableau of shape $D_f+(1)^n\setminus D_f.$}\label{figure: parking function}
\end{figure} 

\subsection{Affine permutations}\label{Section: Affine permutations}

We will need a bijection between rational parking functions and a subset in the affine symmetric group $\aSn,$ constructed in \cite{GMV}.

\begin{definition}
A bijection $\omega:\BZ\to \BZ$ is called an affine $S_n$--permutation, if $\omega(x+n)=\omega(x)+n$ for all $x,$ and $\sum_{i=1}^{n}\omega(i)=\frac{n(n+1)}{2}.$ The set of affine $S_n$--permutations form a group with respect to composition. The group is called the {\em affine symmetric group} and denoted $\aSn.$
\end{definition}

\begin{definition}
An affine permutation $\omega\in \aSn$ is called 
{\em $m$-stable}
if for all $x$ the inequality $\omega(x+m)>\omega(x)$ holds,
i.e. $\om$ has no inversions of height $m$.
The set of all $m$-stable affine permutations is denoted $\aSmn$.
\end{definition}

Let us briefly recall the construction of the bijection $\An:\aSmn\to \PF_{m/n}$ (see \cite{GMV} for more details). Take a permutation $\om\in\aSmn.$ Consider the set $\Delta_{\omega}:=\{i\in\BZ: \omega(i)>0\}\subset\BZ$ and let $M_{\om}$ be its minimal element. Note that the set $\Delta_{\omega}$ is invariant under addition of $m$ and $n.$ %Indeed, if $i\in \Delta_{\w}$ then $\w(i+m)>\w(i)>0$ and $\w(i+n)=\w(i)+n>n>0.$ Therefore $i+m\in\Delta_\w$ and $i+n\in\Delta_\w.$
Let us label the boxes in the $n\times m$ rectangle $R_{m,n}$ so that the box $(i,j)$ is labeled by the weight $l(i,j)=mn-m-n+M_{\om}-mi-nj$ (assuming that the bottom-left corner box has coordinates $(0,0)$). We also extend this labeling to the whole $\BZ^2$ when needed.
%Consider the integer lattice $\mathbb{Z}^2.$ We prefer to think about it as of the set of square boxes, rather than the set of integer points. Consider the rectangle $R_{m,n}:=\{(x,y)\in \mathbb{Z}^2 \mid 0\le x<m, 0\le y<n\}.$ Let us label the boxes of the lattice according to the linear function 
%$$
%l(x,y):=(mn-m-n)+M_{\w}-nx-my.
%$$ 
The function $l(i,j)$ is chosen in such a way that a box is labeled by $M_\om$ if and only if its top-right corner touches the line containing the top-left to bottom-right diagonal of the rectangle, so $l(i,j)\ge M_{\om}$ if and only if the box $(i,j)$ is below this line. The  Young diagram $D_{\om}$ is defined by 
$$
D_{\om}:=\{(i,j)\in R_{m,n}\mid  l(i,j) \in \Delta_{\om}\}.
$$
%If $(x,y)\in D_{\w},$ then $\omega(l(x,y))>0$, hence
%$$
%\omega(l(x-1,y))=\omega(l(x,y)+n)>0,
%$$
%and
%$$
%\omega(l(x,y-1))=\omega(l(x,y)+m)>\omega(l(x,y))>0.
%$$
%Therefore, if $x-1\ge 0,$ then $(x-1,y)\in D_{\w},$ and if $y-1\ge 0,$ then $(x,y-1)\in D_{\w}$. We conclude that $D_{\w}\subset R_{m,n}$ is indeed a Young diagram with the SW corner box $(0,0).$ Note also that $D_\w$ fits under the NW-SE diagonal of $R_{m,n}.$ Therefore, $D_w\in\Ymn.$

The diagram $D_{\om}$ will be the underlying $m/n$--Dyck path of the parking function $\An_\om:=\An(\om),$ so that the set of values of $\An_\om$ equals the set of row lengths of $D_\om.$ What remains to do is to assign the arguments to the values. This is done by labeling the $i^{\mathrm{th}}$ row of the diagram by $\om(a_i),$ where $a_i$ is the weight of the rightmost box of the $i^{\mathrm{th}}$ row of $D_{\om}$ (if a row has length $0$ we take the weight of the box $(-1,i-1),$ just outside the rectangle in the same row).

Note that the weights $\{a_1,\ldots,a_n\}$ of the rightmost boxes of the diagram $D_\om$ are the smallest elements of the set $\Delta_\om\subset\BZ$ in their corresponding congruence classes modulo $n$ (i.e. $a_i-n\notin \Delta_\om$ for all $i$). They are called the {\em $n$--generators of $\Delta_\om.$} It follows that $$\{\om(a_1),\ldots,\om(a_n)\}=\{1,\ldots, n\}.$$

\begin{lemma}[\cite{GMV}]
The map $\An:\aSmn\to\PF_{m/n}$ is a bijection.
\end{lemma}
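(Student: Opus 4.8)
The plan is to construct an explicit two--sided inverse $\PF_{m/n}\to\aSmn$. The key structural fact is that an affine permutation $\om$ is recovered from its values on the $n$--generators of $\Delta_\om$: since $\{a_1,\ldots,a_n\}$ is a complete residue system modulo $n$, every $x\in\BZ$ is uniquely of the form $x=a_i+kn$, and then $\om(x)=\om(a_i)+kn$ by $n$--periodicity. Hence $\om$ is faithfully encoded by the diagram $D_\om$ together with the labels $\om(a_1),\ldots,\om(a_n)$ of its rows, and this is precisely the data of a parking function with underlying path $D_\om$. What must be shown is that, as $\om$ ranges over the $m$--stable permutations, this data ranges over $\PF_{m/n}$ exactly once.

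First I would verify that $\An$ is well defined. The set $\Delta_\om$ is $n$--invariant because $\om(x+n)=\om(x)+n$, and it is $m$--invariant because $\om$ is $m$--stable: $\om(x)>0$ forces $\om(x+m)>\om(x)>0$. Since the weight function satisfies $l(i,j-1)=l(i,j)+n$ and $l(i-1,j)=l(i,j)+m$, the $n$--invariance makes the rows of $D_\om$ left--justified and the $m$--invariance makes the columns bottom--justified, so $D_\om$ is a genuine Young diagram; every box weight in $D_\om$ lies in $\Delta_\om$ and is therefore $\ge M_\om$, so $D_\om$ lies under the diagonal of $R_{m,n}$ and is an $m/n$--Dyck path. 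As recalled in the excerpt $\{\om(a_1),\ldots,\om(a_n)\}=\{1,\ldots,n\}$, so the only remaining point is the column--strict (standard tableau) condition: if two rows of $D_\om$ of equal length sit at heights $i-1<i$, their rightmost boxes satisfy $a_{i-1}=a_i+m$, so $m$--stability yields $\om(a_{i-1})>\om(a_i)$, which is exactly the required strict decrease going up that column. Thus $\An_\om\in\PF_{m/n}$, and injectivity of $\An$ follows from the reconstruction principle above: the shape of $D_\om$ together with the rectangle $R_{m,n}$ and the level--$0$ normalization $\sum_{i=1}^n\om(i)=\frac{n(n+1)}{2}$ determines the weight function, hence $\Delta_\om$, hence the generators $a_i$, and the row labels recover the values $\om(a_i)$.

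For surjectivity, start from $f\in\PF_{m/n}$ with Dyck path $D$, row lengths $d_1,\ldots,d_n$ and row labels $c_1,\ldots,c_n$. Let $\Delta$ be the $(m,n)$--invariant subset of $\BZ$ generated by the box weights of $D$, with the weight function fixed by requiring the level--$0$ normalization, let $a_i$ be the weight of the rightmost box of the $i$--th row, and set $\om(a_i+kn):=c_i+kn$. Then $\om$ is a well--defined $n$--periodic level--$0$ bijection of $\BZ$, i.e. $\om\in\aSn$, and one checks (a finite computation using $\gcd(m,n)=1$) that $\Delta$ contributes no box of $R_{m,n}$ outside $D$, so $D_\om=D$ and $\An_\om=f$. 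The substantive point is that $\om$ is $m$--stable. Because replacing $x$ by $x+\ell n$ leaves the truth of $\om(x+m)>\om(x)$ unchanged, it suffices to consider $x$ with $x,x+m\in\Delta$; writing $x=a_i+kn$ with $k\ge0$, the identity $l(i-1,j)=l(i,j)+m$ shows that adding $m$ moves the box of weight $x$ one row down (cyclically among the $n$ rows), so $x+m=a_{i'}+k'n$ with $i'$ the row below $i$ and $k'-k$ the difference of the corresponding row lengths (with an extra $+m$ in the wrap--around case $i'=n-1$, $i=0$). The inequality $\om(x+m)>\om(x)$ then becomes a bound of the form $c_{i'}-c_i>(d_i-d_{i'})\,n$, which holds automatically whenever $d_i\ne d_{i'}$ — because $|c_{i'}-c_i|<n$, and in the wrap--around case because $D$ lies under the diagonal, forcing $d_0<m$ — and which reduces exactly to the column--strictness of $f$ when $d_i=d_{i'}$. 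Hence $\om\in\aSmn$ and $\An(\om)=f$.

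Finally, the two maps are mutually inverse by construction, since each is the same translation between $\om$ and the pair (path, row labels). I do not expect any single step to be deep; the real care goes into the bookkeeping — fixing the normalization of the weight function (equivalently the value of $M_\om$), checking that the set $\Delta$ rebuilt from $D$ yields no spurious boxes, and handling rows of length $0$ and the boxes just outside $R_{m,n}$ via the conventions in the excerpt. As an alternative one could, after establishing injectivity, finish by an independent cardinality count showing $\#\aSmn=\#\PF_{m/n}$.
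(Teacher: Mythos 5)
The paper itself does not prove this lemma --- it is stated with a citation to \cite{GMV} and no argument is reproduced --- so there is no in-paper proof to compare against. Your proposal is the natural ``explicit two-sided inverse'' argument, which is also the route taken in \cite{GMV}, and its skeleton is sound: $\Delta_\om$ is $n$-invariant by affineness and $m$-invariant by $m$-stability, so $D_\om$ is a Young diagram under the diagonal; the column-strict condition on labels is exactly $m$-stability applied to generators $a,a+m$ in the same column; injectivity follows because the balanced normalization of $\Delta_\om$ is forced by the level-$0$ condition; and surjectivity goes by rebuilding $\Delta$ from $D$, reading off the generators, and defining $\om$ on them.

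Two points deserve to be made explicit rather than waved at. First, in the surjectivity step you assert that the reconstructed $\om$ is level-$0$ (``a well-defined $n$-periodic level-$0$ bijection''). This is not automatic from $\{c_1,\dots,c_n\}=\{1,\dots,n\}$: writing each $x\in\{1,\dots,n\}$ as $a_i+k_in$, one finds $\sum_{x=1}^n\om(x)=\tfrac{n(n+1)}{2}$ iff $\sum_i a_i=\tfrac{n(n+1)}{2}$, and this identity is exactly equivalent to the balanced normalization of $\Delta$ (write $a_r=r+nb_r$; balance says $\sum_r b_r=0$). This is the place where the normalization genuinely does work, and it should be stated as such rather than folded into ``fixing the normalization.'' Second, the $m$-stability check in the wrap-around case (bottom row to top row) uses strictly $d_1<m$, which should be justified from the definition of $\PF_{m/n}$ (the inequality at $i=m$ forces $f$ to take values in $\{0,\dots,m-1\}$). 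With those two items filled in, together with the ``no spurious boxes'' verification you already flag, the argument is complete and matches the \cite{GMV} construction.
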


Let $f\in\PF_{m/n}$ and $\om\in\aSmn$ be such that $\An_\om=f.$ Let $\Delta_{\omega}:=\{i\in\BZ: \omega(i)>0\}\subset\BZ$ and $M_{\om}=\min \Delta_\om$ as above.

\begin{definition}
 Following \cite{GMV}, we define two statistics on parking functions : 
$$
\area(\om)=\area(f):=\frac{(m-1)(n-1)}{2}-\sum_{a=1}^{n}f(a)=\sharp \left([M_\om,+\infty)\setminus \Delta_\om\right)=1-M_\om,
$$
and 
$$
\dinv(\om):=\dinv(f):=\{ (i,j) \in \BZ^2 \mid 1\le i\le n,\ i < j < i+m,  \om(i) > \om(j) \}.
$$

%$$
%\dinv(f)=\frac{(m-1)(n-1)}{2}-\sum_{a=1}^{n}\sharp\left\{\beta >a: 
%0 < \om_f(a) -  \om_f(\beta)<m \right\}
%$$
\end{definition}

\begin{remark}
The equivalence of the formulas for $\area(f)=\area(\om)$ can be shown as follows. By the first formula we get that $\area(f)$ is the number of boxes that fit under the diagonal in the rectangle $R_{m/n},$ but don't fit in the diagram $D_\om.$ The set of weights of these boxes is exactly $[M_\om,+\infty)\setminus \Delta_\om,$ each occurring once, which proves equivalence of the first two formulas. Furthermore, observe that the set $\Delta_\om$ is always ``balanced'': there are as many non-positive elements in $\Delta_\om$ as there are positive elements of the complement. This proves the third formula. Note that $\area(f)$ is constant on $\PF_{m/n}(D)$ for any $m/n$--Dyck path $D$.

\end{remark}

\begin{remark}
The statistic $\dinv(\om)$ basically counts the inversions of $\om$ of height less than $m.$ Describing $\dinv(f)$ directly in terms of the parking function $f$ is somewhat complicated (see \cite{Hikita}).  
\end{remark}

Note that the diagram $D_{\om}$ depends only on the set $\Delta_\om,$ and, vice versa, for any two permutations $\om_1,\om_2\in\An^{-1}(\PF_{m/n}(D))$ one has $\Delta_{\om_1}=\Delta_{\om_2}.$ We will use the following notations:

\begin{definition}
Let $\Delta_D\subset \BZ$ denote the subset given by $\Delta_D=\Delta_\om$ for any $\om\in\aSmn$ such that $\An(\om)\in\PF_{m/n}(D).$ Let also 
$$
\aSmn(\Delta):=\{\om\in \aSmn\ |\ \Delta_\om=\Delta\}=\An^{-1}(\PF_{m/n}(D_{\om})).
$$
%Equivalently, $\aSmn(\Delta_D)=\An^{-1}(\PF_{m/n}(D_{\om})).$
\end{definition}

Let us also reinterpret the quasi-symmetric function $Q_{\ides(f)}$ in terms of the affine permutation $\om.$ The diagonal word $dw(f)$ of the parking function $f$ is the word obtained by reading the labels of the corresponding standard tableau in the order given by the weights of the boxes, or, equivalently, the distance from the diagonal of the $n\times m$ rectangle to the left-top corner of the box. Since each number from $\{1,\ldots,n\}$ appears in $dw(f)$ exactly once, one get $dw(f)\in \Sn.$ It follows immediately from the construction, that the descents of the inverse of the diagonal word $dw(f)$ are exactly the same as the descents of the word $(\om^{-1}(1)\om^{-1}(2)\ldots\om^{-1}(n)).$ For simplicity, we denote this set of descents $\des(\om^{-1})$ (see Example \ref{Example: quasi-symm}).

\begin{example}\label{Example: quasi-symm}
Continuing the example in Figure \ref{figure: parking function}, one gets the diagonal word $dw(f)=(35124),$ with the inverse $dw(f)^{-1}=(34152).$ Therefore, the descent set is $\ides(f)=\{2,4\}.$
To recover the corresponding affine permutation $\om\in\aSmn,$ one should first recover the weight labeling on the rectangle. According to the formulas for the $\area$ statistic, one gets
$$
1-M_\om=\frac{(5-1)(7-1)}{2}-\sum f(a)=12-5=7,
$$
so $\min(\Delta_\om)=M_\om=1-7=-6.$  The weight labeling $l(i,j)=17-7i-5j$ is shown on Figure \ref{Figure: weight labeling}. 
%Therefore, the weight labeling $l(i,j)=mn-m-n+M_\om-mi-nj=17-7i-5j$ is as on Figure \ref{Figure: weight labeling}. 
\begin{figure}
\begin{tikzpicture}[scale=0.6]

\filldraw  [fill=gray!50!white](0,0)--(0,3)--(1,3)--(1,2)--(2,2)--(2,0)--(0,0); 
%\draw  (0,0)--(0,5)--(7,5)--(7,0)--(0,0);
%\draw  (1,5)--(1,3)--(2,3)--(2,2)--(3,2)--(3,0);
\draw  (0,5)--(7,0); 
\draw [step=1, color=gray, very thin] (0,0) grid (7,5);

\draw (-2.5,0.5) node {$\bf 4$};
\draw (-2.5,1.5) node {$\bf 2$};
\draw (-2.5,2.5) node {$\bf 1$};
\draw (-2.5,3.5) node {$\bf 5$};
\draw (-2.5,4.5) node {$\bf 3$};

\draw (-0.5,0.5) node {$22$};
\draw (-0.5,1.5) node {$15$};
\draw (-0.5,2.5) node {$8$};
\draw (-0.5,3.5) node {$1$};
\draw (-0.5,4.5) node {$-6$};
\draw (0.5,0.5) node {$17$};
\draw (0.5,1.5) node {$10$};
\draw (0.5,2.5) node {$3$};
\draw (0.5,3.5) node {$-4$};
%\draw (0.5,4.5) node {$-11$};
\draw (1.5,0.5) node {$12$};
\draw (1.5,1.5) node {$5$};
\draw (1.5,2.5) node {$-2$};
\draw (2.5,0.5) node {$7$};
\draw (2.5,1.5) node {$0$};
\draw (3.5,0.5) node {$2$};
\draw (3.5,1.5) node {$-5$};
\draw (4.5,0.5) node {$-3$};

\end{tikzpicture}
\caption{The weight labeling corresponding to the parking function $f\in\PF_{7/5}$ given by $(1,2,3,4,5)\mapsto (1,2,0,2,0).$ We put the row labeling from the corresponding standard Young tableau (see Figure \ref{figure: parking function}) on the left to avoid confusion.}\label{Figure: weight labeling}
\end{figure} 
We conclude that the $5$--generators of  $\Delta_\om$ are $(-6,1,3,5,12),$ and $\om$ is defined by $\om(-6)=3,\ \om(1)=5,\ \om(3)=1,\ \om(5)=2,$ and $\om(12)=4.$ One gets:
$$
(\om^{-1}(1),\om^{-1}(2),\om^{-1}(3),\om^{-1}(4),\om^{-1}(5))=(3,5,-6,12,1).
$$
Note that the descents are the same as for the inverse of the diagonal word: $\des(\om^{-1})=\{2,4\}.$ By definition of the Gessel's fundamental quasi-symmetric function (see \cite{G}):
$$
Q_{\ides(f)}=Q_{\des(\om^{-1})}=\sum\limits_{\substack{i_1\le i_2\le\ldots\le i_5,\\ i_k=i_{k+1}\Rightarrow k\notin\des(\om^{-1})}}z_{i_1}z_{i_2}z_{i_3}z_{i_4}z_{i_5}=\sum\limits_{i_1\le i_2< i_3\le i_4< i_5}z_{i_1}z_{i_2}z_{i_3}z_{i_4}z_{i_5}.
$$
From Figure \ref{Figure: weight labeling} it is clear that $\area(\om)=7.$ To compute $\dinv(\om),$ it is convenient to present $\om$ in the following form:
$$
\begin{array}{ccccccccccccc}
x \ \ &1&2&3&4&5&6&7&8&9&10&11&12\ \\
\omega(x) \ &5&-6&1&13&2&10&-1&6&18&7&15&4\ \\
\end{array}.
$$
One gets 
$$
\dinv(\om)=12-\sharp\{(1,2),(1,3),(1,5),(1,7),(3,7),(4,5),(4,6),(4,7),(4,8),(4,10),(5,7)\}
$$
$$
=12-11=1.
$$
\end{example} 

Combining the above, we get the following identity:
\begin{equation}\label{Equation: Fdt via permutations}
\CF(D;t)=\sum_{f\in \PF_{m/n}(D)}t^{\dinv(f)}\cdot Q_{\ides(f)}(z)=\sum\limits_{\om\in\aSmn(\Delta_D)} t^{\dinv(\om)}Q_{\des(\om^{-1})}(z).
\end{equation}
One can also reformulate the rational Shuffle conjecture:

\begin{conjecture}
The following equation holds:
$$
P_{m,n}\cdot 1=\sum_{\om\in \aSmn}q^{\area(\om)}t^{\dinv(\om)}\cdot Q_{\des(\om^{-1})}(z),
$$
\end{conjecture}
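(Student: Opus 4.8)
The displayed equation is not established in isolation; it is shown to be \emph{equivalent} to the rational Shuffle conjecture \eqref{rat shuffle}, so the plan is simply to transport the latter across the bijection $\An:\aSmn\to\PF_{m/n}$ of \cite{GMV}. Since $\An$ is a bijection (the Lemma of \cite{GMV} recalled above), it suffices to check that the three factors $q^{\area}$, $t^{\dinv}$, $Q_{\ides}$ attached to a parking function $f\in\PF_{m/n}$ agree, term by term, with the factors $q^{\area(\om)}$, $t^{\dinv(\om)}$, $Q_{\des(\om^{-1})}$ attached to the affine permutation $\om=\An^{-1}(f)$.

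First, the $q$- and $t$-exponents match by definition: above we set $\area(\om):=\area(\An_\om)$ and $\dinv(\om):=\dinv(\An_\om)$, so these impose nothing new. (The description of $\dinv$ alluded to in the Remark is only needed if one wants an intrinsic parking-function formula; it plays no role here.) Second, for the quasisymmetric factor one must verify $\ides(f)=\des(\om^{-1})$. This was done in the paragraph preceding Example \ref{Example: quasi-symm}: reading the labels of the standard tableau attached to $f$ in order of decreasing distance to the diagonal of $R_{m,n}$ yields the diagonal word $dw(f)$, and by the construction of $\An$ the label in the $i$-th row of $D_\om$ is exactly $\om(a_i)$, where $a_1,\dots,a_n$ are the $n$-generators of $\Delta_\om$ listed by row; hence the word $(\om^{-1}(1),\dots,\om^{-1}(n))$ coincides with $dw(f)^{-1}$, and the two descent sets are equal. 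Therefore $Q_{\ides(f)}=Q_{\des(\om^{-1})}$ by the definition of Gessel's fundamental quasisymmetric function.

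Combining the three identities, $\An$ carries the right-hand side of \eqref{rat shuffle}, summand by summand, onto $\sum_{\om\in\aSmn}q^{\area(\om)}t^{\dinv(\om)}Q_{\des(\om^{-1})}(z)$, which is the asserted reformulation. Regrouping this sum according to the underlying Dyck path $D=D_\om$ — legitimate because $\area$ is constant on each $\aSmn(\Delta_D)$ — and invoking \eqref{Equation: Fdt via permutations} rewrites it as $\sum_D q^{\area(D)}\CF(D;t)$, matching the Corollary stated above.

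I do not expect a genuine obstacle: the argument is pure bookkeeping. The only step demanding care is the descent-set identification $\ides(f)=\des(\om^{-1})$, where one must be vigilant about the conventions in force — French notation for Young diagrams, labels \emph{decreasing} up each column, and the weight function $l(i,j)=mn-m-n+M_\om-mi-nj$ — so that the descent sets line up exactly rather than being complemented (the complementary convention would produce $Q_{\ias}$ in place of $Q_{\ides}$, as flagged in the first Remark of the introduction).
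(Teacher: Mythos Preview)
Your reading is exactly the paper's: this Conjecture is not proved but merely presented as a reformulation of \eqref{rat shuffle}, obtained by transporting each summand across the bijection $\An$ and invoking the identities $\area(\om)=\area(f)$, $\dinv(\om)=\dinv(f)$, and $\ides(f)=\des(\om^{-1})$ already recorded before Example~\ref{Example: quasi-symm}. One small slip: the word $(\om^{-1}(1),\ldots,\om^{-1}(n))$ does \emph{not} literally coincide with $dw(f)^{-1}$ (in the running example the latter is $(3,4,1,5,2)$ while the former is $(3,5,-6,12,1)$); what is true, and all that is needed, is that the two sequences have the same relative order and hence the same descent set.
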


\subsection{Ribbon tableaux}

Given a Young diagram $\mu$, let us go along its boundary from the bottom-right to the top-left, and write 0 if we go left and 1 if we go up. We get a sequence of 0's and 1's which stabilizes to 0 at $-\infty$ and to 1 at $+\infty$. Such a sequence is sometimes referred to as ``Maya diagram'' \cite{Johnson,Nagao}, and can be interpreted  as the characteristic function of a subset $M(\mu)$ in $\BZ$. The subset $M(\mu)$ is defined up to a shift. The standard way to choose a representative is as follows. 

\begin{definition}
Given a box $(i,j)\in \BZ^2$ we say that its content equals $j-i.$ The set $M(\mu)$ is defined as the set of contents of all boxes to the left of the 
vertical steps in the boundary of $\mu$.
\end{definition}

%A horizontal (vertical) edge of the boundary of $\mu$ corresponds to the content of the box above (to the left) of the step. 
In particular, in this normalization the empty diagram corresponds to the subset $\BZ_{>0}.$
Let us recall some standard definitions.

\begin{definition}
A set of boxes $\nu\subset \BZ^2$ is called a {\it skew Young diagram} if there exist Young diagrams $\mu\supset\lambda$ such that $\nu=\mu\backslash\lambda.$ A {\it ribbon of length $m$} (or simply an {\it $m$-ribbon}) is a connected skew Young diagram $\nu$ of area $m$ with no $2\times 2$ squares inside. The content $c(\nu)$ of an $m$-ribbon $\nu$ is the maximum of contents of its boxes. 
A skew Young diagram tiled by several $m$-ribbons is called a {\it skew $m$-ribbon diagram}.
\end{definition}

Suppose that for Young diagrams $\lambda\subset \mu$ the skew shape $\nu=\mu\setminus \lambda$ is an $m$--ribbon.
It is easy to see that $M(\mu)$ is obtained from $M(\lambda)$ as follows: an element $x\in M(\lambda)$ is replaced by $x-m$ (so it ``jumps'' by $m$ units to the left) and all other elements stay unchanged. Note also that $x=c(\nu)+1.$ The following statement is clear from the construction.

\begin{proposition}
\label{prop:jump vs hgt}
Suppose that $\lambda$, $\mu,\nu$ and $x$ are as above. Then the height of the ribbon $\nu$ equals:
$$
\hgt(\nu)=1+\sharp\{y\in M(\lambda): x-m<y<x\}.
$$
\end{proposition}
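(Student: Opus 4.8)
The plan is to unwind both sides in terms of the subsets $M(\lambda)$ and $M(\mu)$ of $\BZ$ attached to the Young diagrams, using the Maya diagram description established just above the statement. We already know from the preceding paragraph that, when $\nu=\mu\setminus\lambda$ is an $m$-ribbon, $M(\mu)$ is obtained from $M(\lambda)$ by taking a single element $x\in M(\lambda)$ and replacing it by $x-m$, with $x=c(\nu)+1$, while all other elements are unchanged. In particular $x-m\notin M(\lambda)$ (otherwise $M(\mu)$ would not be a legal Maya diagram: it would have a repeated element), and the ``gap'' $[x-m,x)$ in passing from $\lambda$ to $\mu$ is exactly what the ribbon occupies.

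First I would recall the standard correspondence between lattice paths / Maya diagrams and boundary paths: reading the boundary of a Young diagram from bottom-right to top-left and recording $0$ for a horizontal (left) step and $1$ for a vertical (up) step, the positions of the $1$'s are precisely the elements of $M(\cdot)$ in the chosen normalization (content of the box to the left of the vertical step). So the boundary path of $\lambda$ between contents $x-m$ and $x$ has a $1$ in position $x$ and, by the observation above, a $0$ in position $x-m$; the $1$'s strictly between $x-m$ and $x$ are exactly the $y\in M(\lambda)$ with $x-m<y<x$. Passing to $\mu$ changes the $1$ at position $x$ to a $0$ and the $0$ at position $x-m$ to a $1$, i.e.\ it ``slides'' the vertical step from content $x$ down to content $x-m$ across the intervening path; this slide is exactly the ribbon $\nu$, and it is a connected skew shape of area $m$ with no $2\times2$ block, consistent with $\nu$ being an $m$-ribbon.

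Next I would read off the height. The height of a ribbon is the number of rows it occupies, equivalently one plus the number of ``up'' steps strictly interior to the ribbon's boundary path. But the boundary path of $\nu$ between contents $x-m$ and $x$ coincides with the corresponding segment of the boundary of $\lambda$ except at the two endpoints, so the interior up-steps of $\nu$ are precisely the $1$'s of $M(\lambda)$ with content strictly between $x-m$ and $x$, namely the set $\{y\in M(\lambda):x-m<y<x\}$. Adding the $+1$ for the initial row gives
$$
\hgt(\nu)=1+\sharp\{y\in M(\lambda): x-m<y<x\},
$$
which is the claim. The only mildly delicate point — and the one I'd expect to spend the most care on — is pinning down the correct normalization and off-by-one bookkeeping: making sure that $x=c(\nu)+1$ is indeed the element of $M(\lambda)$ that jumps (not $c(\nu)$), that $x-m$ is genuinely absent from $M(\lambda)$, and that ``height'' is being counted as number of rows rather than number of rows minus one. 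Once the dictionary between vertical boundary steps and elements of $M(\lambda)$ is fixed, everything else is a direct translation, so I would present this as an essentially immediate consequence of the construction, as the paper's phrasing ("clear from the construction") suggests.
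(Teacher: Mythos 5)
Your argument is correct and unpacks exactly the Maya-diagram picture the paper invokes when it calls the proposition ``clear from the construction'': the elements of $M(\lambda)$ strictly between $x-m$ and $x$ are the vertical steps of the boundary segment shared by $\lambda$ and $\mu$, each marking a row transition of the ribbon, and adding one for the initial row gives the stated formula (which you can sanity-check against the hook example $\mu=(m-i+1,1^{i-1})$ given immediately afterward). The paper provides no further detail, so your write-up is simply a fuller version of the intended reasoning.
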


Note that $\{y\in M(\lambda): x-m<y<x\}$ are exactly the elements of $M(\lambda)$ that $x$ ``jumped over'' as it moved to $x-m.$

\begin{definition}
The spin of an $m$-ribbon $\nu=\mu\backslash\lambda$ is defined as $\spin(\nu)=\hgt(\nu)-1.$ The spin of an $m$-ribbon diagram is the sum of spins of the ribbons of the diagram.
\end{definition}
 
%Statistic $\spin$ appears in the work of Leclerc, Lascoux, and Thibon \cite{LLT}. 
 
\begin{example}
Suppose that $\lambda=\emptyset$, and $\mu=(m)$. Then $M(\lambda)=[1,+\infty)$ and $M(\mu)=\{1-m\}\cup [2,+\infty),$ so $1$ jumped $m$ positions to the left. The height of $\mu\backslash \lambda=\mu$ equals $1,$ and $\spin(\mu)=0.$ %jumped over $0$ elements of $M(\lambda).$
Suppose now that $\mu=(1^m).$ Then $M(\mu)=[0,m-1]\cup [m+1,+\infty),$ so $m$ jumped $m$ positions to the left. The height is $m,$ and $\spin(\mu)=m-1.$ %jumped over exactly $m-1$ elements of $M(\lambda).$
More generally, if $\mu=(m-i+1,1^{i-1}),\ 0< i\le m,$ then $M(\mu)=\{i-m\}\cup[1,i-1]\cup [i+1,+\infty),$ so $i$ jumped $m$ positions to the left, the height is $i,$ and $\spin(\mu)=i-1.$ %jumped over exactly $i-1$ elements of $M(\lambda).$
%It is clear that $M(\mu)$ is obtained from $M(\lambda)+i+1=[i+1,+\infty)$ by a jump of $x=m$ to $0$. Since $x$ jumps over $m-1-i$ elements, we have $\hgt(\mu\setminus \lambda)=i+1$, as expected.
\end{example}

\begin{definition}
Given a skew Young diagram $\nu=\mu\backslash\lambda,$ a standard $m$-ribbon tableau is a skew $m$-ribbon diagram of shape $\nu$ together with an order on the ribbons, say $\nu=r_1\sqcup r_2\sqcup\ldots\sqcup r_n,$\ $r_1\prec \ldots\prec r_n,$ such that for any $k$ the partial union $\lambda\sqcup r_1\sqcup\ldots\sqcup r_k$ is a Young diagram (see Figure \ref{Figure:spin and dinv} for an example). Let $\SRT(\nu,m)$ denote the set of standard $m$-ribbon tableaux of shape $\nu.$ The $\spin$ statistics for $m$-ribbon tableau is defined to be equal to the $\spin$ statistic of the underlying $m$--ribbon diagram. 
\end{definition}

\begin{remark}
The definitions of the $\spin$ statistic differ from source to source. Here we follow the notations from \cite{LT2}. In \cite{LLT} the authors used a factor $\frac{1}{2},$ so their $\spin$ is twice less then ours (and possibly half-integer), in \cite{HHLRU} the authors also subtracted the minimal possible value of $\spin$ on the set of $m$-diagrams of a given shape.
\end{remark}

Note that choosing a standard $m$-ribbon tableau of shape $\nu$ is equivalent to choosing a particular way to obtain $M(\mu)$ from $M(\lambda)$ by a sequence of $m$--jumps, i.e. choosing a particular order of jumps.

\begin{definition}
Let $T=\{\nu=r_1\sqcup r_2\sqcup\ldots\sqcup r_n,\prec\}\in \SRT(\nu,m)$ be a standard $m$-ribbon tableau. The {\it content sequence}  $c(T)$ is the sequence of contents of the ribbons of $T$ read in order $\prec.$
\end{definition}

We will also need the notion of a {\it semistandard $m$-ribbon tableau} and the standardization map. Let $\nu=\mu\backslash\lambda$ be a skew Young diagram. Consider an $m$-ribbon diagram $\nu=r_1\sqcup r_2\sqcup\ldots\sqcup r_n,$ and a function $\tau:\{r_1,\ldots,r_n\}\to \BZ_{>0}.$ The function $\tau$ defines a partial order on the set of ribbons. One can refine this order by using the increasing order on contents of the ribbons. More precisely, one says that $r_i\prec r_j$ if either $\tau(r_i)<\tau(r_j),$ or $\tau(r_i)=\tau(r_j)$ and $c(r_i)<c(r_j).$

\begin{definition}
An $m$-ribbon diagram $\nu=r_1\sqcup r_2\sqcup\ldots\sqcup r_n,$ together with a function $\tau:\{r_1,\ldots,r_n\}\to \BZ_{>0}$ is called a {\it semistandard $m$-ribbon tableau} of shape $\nu$ if
\begin{enumerate}
\item The refinement $\prec$ constructed above is a total order on ribbons,
\item $\prec$ defines a standard $m$-ribbon tableau of shape $\nu.$
\end{enumerate}
Let $\SSRT(\nu,m)$ denote the set of semistandard $m$-ribbon tableaux of shape $\nu.$
\end{definition}

The resulting map $\st:\SSRT(\nu,m)\to\SRT(\nu,m)$ is called the {\it standardization map.} Another way to understand semistandard tableaux is to look at the fibers of the map $\st.$ The following lemma is merely a reformulation of the definitions:  

\begin{lemma}
Let $T=\{\nu=r_1\sqcup\ldots\sqcup r_n,\prec\}\in\SYT(\nu,m)$ be a standard $m$-ribbon tableau and $\tau:\{r_1,\ldots,r_n\}\to \BZ_{>0}$ be a function. Then $\{\nu=r_1\sqcup\ldots\sqcup r_n,\tau\}\in\st^{-1}(T)$ iff
\begin{enumerate}
\item $\tau$ is weakly increasing with respect to $\prec,$
\item if $\tau(r_i)=\tau(r_j)$ and $r_i\prec r_j,$ then $c(r_i)<c(r_j).$
\end{enumerate}
\end{lemma}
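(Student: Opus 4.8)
The plan is to unwind the definition of the standardization map $\st$ together with that of $\SSRT(\nu,m)$, and observe that the two conditions in the lemma are precisely the conditions isolated in the definition, read from the other side. Recall that for a function $\tau\colon\{r_1,\ldots,r_n\}\to\BZ_{>0}$ one forms the refined relation $\prec_\tau$ by declaring $r_i\prec_\tau r_j$ whenever $\tau(r_i)<\tau(r_j)$, or $\tau(r_i)=\tau(r_j)$ and $c(r_i)<c(r_j)$; by definition $(\nu,\tau)\in\SSRT(\nu,m)$ precisely when $\prec_\tau$ is a total order and $(\nu,\prec_\tau)\in\SRT(\nu,m)$, and in that case $\st(\nu,\tau)=(\nu,\prec_\tau)$. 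Thus $(\nu,\tau)\in\st^{-1}(T)$ means exactly that $\prec_\tau$ is a total order, $(\nu,\prec_\tau)$ is a standard $m$-ribbon tableau, and $\prec_\tau\,=\,\prec$.

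For the forward implication I would assume $(\nu,\tau)\in\st^{-1}(T)$, so $\prec_\tau=\prec$. If $r_i\prec r_j$ then $r_i\prec_\tau r_j$, and reading off the definition of $\prec_\tau$ yields $\tau(r_i)\le\tau(r_j)$, which is condition~(1); and if in addition $\tau(r_i)=\tau(r_j)$, the only surviving clause in the definition of $\prec_\tau$ forces $c(r_i)<c(r_j)$, which is condition~(2).

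For the converse I would assume (1) and (2) and first check the one-sided implication $r_i\prec r_j\Rightarrow r_i\prec_\tau r_j$: by (1) we have $\tau(r_i)\le\tau(r_j)$; if this inequality is strict the claim is immediate, and if $\tau(r_i)=\tau(r_j)$ then (2) gives $c(r_i)<c(r_j)$, so again $r_i\prec_\tau r_j$. Since $\prec$ is a total order on the finitely many ribbons $r_1,\ldots,r_n$ — it is the order of a standard ribbon tableau — any two distinct ribbons are $\prec$-comparable, hence $\prec_\tau$-comparable in the same direction, so $\prec_\tau$ is total and in fact coincides with $\prec$. Consequently $(\nu,\prec_\tau)=(\nu,\prec)=T$ is a standard $m$-ribbon tableau, so $(\nu,\tau)$ is semistandard and $\st(\nu,\tau)=T$, i.e. $(\nu,\tau)\in\st^{-1}(T)$.

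The only step requiring any care — and the closest thing to an obstacle in an otherwise purely definitional argument — is the passage in the converse from the one-sided implication ``$r_i\prec r_j\Rightarrow r_i\prec_\tau r_j$'' to the genuine equality $\prec_\tau=\prec$; this is where totality of $\prec$ is essential, along with the elementary observation that $\prec_\tau$ is antisymmetric wherever it is defined. Everything else is a direct transcription of the definitions of $\prec_\tau$, $\SSRT(\nu,m)$ and $\st$.
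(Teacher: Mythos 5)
Your argument is correct and is exactly the ``unwinding of definitions'' that the paper leaves implicit by remarking that the lemma ``is merely a reformulation of the definitions.'' You correctly identify the one non-automatic point in the converse — that totality of $\prec$ together with antisymmetry of the refined relation upgrades the containment $\prec\,\subseteq\,\prec_\tau$ to the equality $\prec_\tau=\prec$ — so nothing is missing.
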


\begin{remark}
Equivalently, one can say that $\{\nu=r_1\sqcup\ldots\sqcup r_n,\tau\}$ is a semistandard $m$-ribbon tableau if the function $\tau:\{r_1,\ldots,r_n\}\to\BZ_{>0}$ is weakly increasing in rows and columns, and for any $k\in\BZ_{>0}$ the preimage $\tau^{-1}(k)\subset\nu$ is tiled in such a way that the increasing content order defines a standard $m$-ribbon tableau on $\tau^{-1}(k).$ Equivalently, every ribbon of $\tau^{-1}(k)$ starts from the leftmost box of a row of $\tau^{-1}(k).$ Such shapes and tilings are called {\it vertical $m$-ribbon strips} and {\it official tilings} correspondingly (see Figure \ref{Figure: vertical m-strip}).
\end{remark}

\begin{remark}
Note that our definitions differ from those in \cite{HHLRU} by transposition. This is due to the fact that the classical Shuffle conjecture differs from the rational version by the involution $\Omega$ (twist by the sing representation).
\end{remark}

\begin{figure}

\begin{tikzpicture}[scale=0.5]
\draw [thick] (5,0)--(5,1)--(4,1)--(4,5)--(2,5)--(2,6)--(1,6)--(1,7)--(0,7)--(0,2)--(1,2)--(1,1)--(2,1)--(2,0)--(5,0);
\draw [thick] (4,1)--(3,1)--(3,2)--(2,2)--(2,3)--(0,3);
\draw [thick] (4,3)--(3,3)--(3,4)--(1,4)--(1,5)--(0,5);
\draw (2.3,1.3) node {$r_1$};
\draw (2.3,3.3) node {$r_2$};
\draw (1.6,4.6) node {$r_3$};
\end{tikzpicture}

\caption{A vertical $7$-ribbon strip with the official tiling. Note that the contents of the ribbons satisfy $c(r_1)<c(r_2)<c(r_3)$ (the content increases as we move up and/or left). Note also, that ordering the ribbons in the increasing content order $r_1\prec r_2\prec r_3$ makes a valid standard ribbon tableau.}\label{Figure: vertical m-strip}

\end{figure}
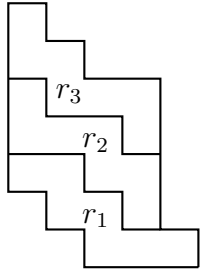
One gets the following corollary for quasisymmetric functions:

\begin{corollary}\label{Corollary:quasisym via semistand}
Let $T\in\SYT(\nu,m),$ and $c(T)$ be the content sequence of $T.$ Then
$$
Q_{\des(c(T))}(z)=\sum\limits_{S\in\st^{-1}(T)} z^S,
$$
where $z^S=z_{\tau(r_1)}z_{\tau(r_2)}\ldots z_{\tau(r_n)}$ for a semistandard tableau $S=\{\nu=r_1\sqcup\ldots\sqcup r_n,\tau\}\in\SSYT(\nu,m).$
\end{corollary}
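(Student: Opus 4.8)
The plan is to unwind both sides of the identity into explicit sums over weakly increasing sequences of positive integers and to match them term by term. Fix the standard $m$-ribbon tableau $T=\{\nu=r_1\sqcup\cdots\sqcup r_n,\prec\}$ with $r_1\prec\cdots\prec r_n$, so that the content sequence is $c(T)=(c(r_1),\ldots,c(r_n))$ and $k\in\des(c(T))$ precisely when $c(r_k)>c(r_{k+1})$. By the definition of the Gessel fundamental quasisymmetric function (as recalled in Example~\ref{Example: quasi-symm}),
$$
Q_{\des(c(T))}(z)=\sum_{\substack{1\le i_1\le i_2\le\cdots\le i_n\\ i_k=i_{k+1}\ \Rightarrow\ k\notin\des(c(T))}}z_{i_1}z_{i_2}\cdots z_{i_n}.
$$
Thus it suffices to exhibit a monomial-preserving bijection between the indexing set of this sum and the fiber $\st^{-1}(T)$.

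The bijection sends a sequence $(i_1,\ldots,i_n)$ to the semistandard tableau $S=\{\nu=r_1\sqcup\cdots\sqcup r_n,\tau\}$ with $\tau(r_k):=i_k$; clearly $z^S=z_{\tau(r_1)}\cdots z_{\tau(r_n)}=z_{i_1}\cdots z_{i_n}$, so it remains to check that this map lands in $\st^{-1}(T)$ and is a bijection, which I would do using the preceding Lemma describing the fibers of $\st$. Condition (1) of that Lemma --- $\tau$ weakly increasing along $\prec$ --- is exactly $i_1\le\cdots\le i_n$. For condition (2), suppose $\tau(r_i)=\tau(r_j)$ with $i<j$; since $\tau$ is weakly increasing we get $i_i=i_{i+1}=\cdots=i_j$, so none of $i,i+1,\ldots,j-1$ lies in $\des(c(T))$, whence $c(r_i)<c(r_{i+1})<\cdots<c(r_j)$ and in particular $c(r_i)<c(r_j)$, giving condition (2). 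Conversely, any $S\in\st^{-1}(T)$ has underlying sequence $(\tau(r_1),\ldots,\tau(r_n))$ weakly increasing by (1), and if $\tau(r_k)=\tau(r_{k+1})$ then (2) applied to the consecutive pair $r_k\prec r_{k+1}$ gives $c(r_k)<c(r_{k+1})$, i.e. $k\notin\des(c(T))$; so the sequence lies in the index set above. The two constructions are visibly inverse to each other, and summing $z^S$ over the fiber reproduces the displayed expansion of $Q_{\des(c(T))}(z)$.

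Every step here is a direct translation through the Lemma on fibers of $\st$ and the definition of $Q_S$, so I do not expect a genuine obstacle. The only point needing a brief argument is the passage, in verifying condition (2), from the consecutive comparisons $i_k=i_{k+1}\Rightarrow k\notin\des(c(T))$ to the comparison of an arbitrary $\prec$-comparable pair of ribbons with equal $\tau$-value; this is precisely the monotonicity chain used above. As a byproduct, the identity shows that $\sum_{S\in\st^{-1}(T)}z^S$ depends only on the content sequence $c(T)$, as one expects from the left-hand side.
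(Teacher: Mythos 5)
Your proof is correct and is precisely the expansion the paper leaves implicit: the corollary is stated as an immediate consequence of the preceding Lemma on fibers of $\st$, and your argument fills in the details by unwinding the explicit sum form of $Q_{\des(c(T))}$ and matching its index set with $\st^{-1}(T)$ via the assignment $\tau(r_k)=i_k$. One small point you handled correctly but could have flagged explicitly: the contents $c(r_1),\ldots,c(r_n)$ are pairwise distinct (each ribbon corresponds to a distinct jump in the Maya diagram), which is what guarantees that $k\notin\des(c(T))$ is equivalent to the strict inequality $c(r_k)<c(r_{k+1})$ used in your chain argument.
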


\subsection{The $m$-cores and quotients}

A Young diagram $\mu$ is called an $m$-core if neither of hook-lengths of its boxes is equal to $m$.
It is clear that $M(\mu)$ is $m$-invariant if and only if $\mu$ is an $m$--core.  More generally, let $M_i(\mu)$ denote the set of elements of $M(\mu)$ 
 with remainder $i$ modulo $m$, define $\widetilde{M_i}(\mu)=(M_i(\mu)-i)/m$. The $m$-quotient of $\mu$ is defined as the $m$-tuple of Young diagrams 
 $\Quot_{m}^{(i)}(\mu)$ corresponding to $\widetilde{M_i}(\mu)$. 
 
Given an arbitrary Young diagram $\mu,$ one can construct the $m$-core of $\mu$ by consecutively removing $m$-strips from it. This is best seen in terms of the subset $M(\mu):$ we move elements to the right by $m$-jumps as much as possible. Clearly, the resulting $m$-invariant subset is independent on the choice of an order of jumps. Let $\Core_m(\mu)$ denote the $m$-core of $\mu.$ By construction, it is clear that the map 
$$
\mu\mapsto \left(\Core_m(\mu),\Quot_m^{(1)}(\mu),\ldots,\Quot_m^{(m)}(\mu)\right)
$$ 
is a bijection between the set of Young diagrams and the set of $m$-cores times the set of $m$-tuples of Young diagrams.

%\begin{example}
%Suppose that for all $i$  the partition $\Quot_{m}^{(i)}(\mu)$ has one part: $\Quot_{m}^{(i)}(\mu)=(s_i)$.  
%\end{example}

Let $D$ be an $m/n$-Dyck path, $\Delta_D\subset \BZ$ be the corresponding $(m,n)$-invariant subset (see Section \ref{Section: Affine permutations}), and $\mu$ be the simultaneous $(m,n)$-core such that $M(\mu)=\Delta_D.$ The map $D\mapsto \mu$ provides a bijection between the set of $m/n$-Dyck paths and the set of simultaneous $(m,n)$-cores. This bijection was first described by J. Anderson in \cite{anderson}, although in somewhat different terms. See also \cite{GM}, Section 2.4.

\section{Main construction}

Let $\om\in\aSmn$ be an $m$-stable affine permutation, 
%$f\in\PF_{m/n}(D)$ be an $m/n$-parking function with the underlying $m/n$-Dyck path $D.$ J. Anderson (\cite{anderson}) constructed a bijection between the set of $m/n$-Dyck paths and the set of simultaneous $(m,n)$--cores. In terms of the constructions described above, this bijection can be described as follows. Let $\om:=\An^{-1}(f)$ be the corresponding stable affine permutation 
define $\Delta:=\{i\in\BZ:\om(i)>0\}$ as before (see Section \ref{Section: Affine permutations}). %Note that by construction, $\Delta_D:=\Delta_{\om}$ only depends on the underlying Dyck path $D.$ 
Since $\Delta$ is $(m,n)$-invariant, it follows that the corresponding Young diagram $\lambda:=M^{-1}(\Delta)$ is a simultaneous $(m,n)$-core.
As before, let $\{a_1,a_2,\ldots,a_n\}$ be the set of $n$-generators of $\Delta:$ 
$$
\Delta\setminus (\Delta+n)=\{\om^{-1}(1),\ldots,\om^{-1}(n)\}=\{a_1,\ldots,a_n\}.
$$
Consider the subset $M=\{a_1-m,\ldots,a_n-m\}\cup (\Delta+n)\subset \BZ$.  It is {\em not} $m$-invariant, hence it corresponds to a Young diagram $\mu$ which is not an $m$-core. By construction, we have
$$
\Core_m(\mu)=\lambda,
$$ 
and the subset $M$ is obtained from $\Delta$ by $n$ jumps $a_1\mapsto a_1-m,\ldots,a_n\mapsto a_n-m.$ One can do the jumps in different orders, however, if $a_i=a_j+m,$ then $a_i$ has to jump before $a_j$. The affine permutation $\om$ prescribes the following order of jumps: we first move $\om^{-1}(1)\mapsto \om^{-1}(1)-m,$ then $\om^{-1}(2)\mapsto \om^{-1}(2)-m,$ and so on up to $\om^{-1}(n)\mapsto \om^{-1}(n)-m.$ The above condition on the order of the jumps is equivalent to $\om$ being $m$-stable. Recall that choosing a valid order of jumps is equivalent to choosing a standard $m$-ribbon tableau of shape $\nu:=\mu\backslash \lambda.$ We conclude that there is a bijection 
\begin{equation}\label{Equation: bijection perm to srt}
T:\aSmn(\Delta)\to\SRT(\nu,m).
\end{equation}
Moreover, the contents of the ribbons of $T(\om)$ are exactly one less then the generators of $\Delta.$ More precisely, one has
$$
c(T(\om))=(\om^{-1}(1)-1,\ldots, \om^{-1}(n)-1),
$$
in particular 
\begin{equation}\label{Equation: omega inv vs cont}
\des(\om^{-1})=\des(c(T(\om))).
\end{equation}

\begin{example}\label{Example: jumps}
Consider the $7$-stable permutation $\om$ from Example \ref{Example: quasi-symm}. We have $\om(-6)=3,\ \om(1)=5,\ \om(3)=1,\ \om(5)=2,$ and $\om(12)=4,$ or
$$
\begin{array}{ccccccccccccccc}
x \ \ &-6&-5&-4&-3&-2&-1&0&1&2&3&4&5&6&7\ \\
\omega(x) \ &3&-8&0&-11&-4&8&-3&5&-6&1&13&2&10&-1\ \\
\end{array}
$$
Therefore $\Delta=\{i\in\BZ:\ \om(i)>0\}=\{-6,-1,1,3,4,5,6\}\sqcup \BZ_{\ge 8}.$ The process of obtaining the subset $M$ from $\Delta$ is best described in terms of the characteristic functions:
$$
\begin{array}{cccccccccccccccccccccccccc}
&&&&&&&{\scriptstyle -6}&&&&&&&{\scriptstyle 1}&&{\scriptstyle 3}&&{\scriptstyle 5}&&&&&&&{\scriptstyle 12}\\
0&0&0&0&0&0&0&{\bf 1_3}&0&0&0&0&1&0&{\bf 1_5}&0&{\bf 1_1}&1&{\bf 1_2}&1&0&1&1&1&1&{\bf 1_4}\\
0&0&0&0&0&0&0&{\bf 1_3}&0&{\bf 1}&0&0&1&0&{\bf 1_5}&0&0&1&{\bf 1_2}&1&0&1&1&1&1&{\bf 1_4}\\
0&0&0&0&0&0&0&{\bf 1_3}&0&{\bf 1}&0&{\bf 1}&1&0&{\bf 1_5}&0&0&1&0&1&0&1&1&1&1&{\bf 1_4}\\
{\bf 1}&0&0&0&0&0&0&0&0&{\bf 1}&0&{\bf 1}&1&0&{\bf 1_5}&0&0&1&0&1&0&1&1&1&1&{\bf 1_4}\\
{\bf 1}&0&0&0&0&0&0&0&0&{\bf 1}&0&{\bf 1}&1&0&{\bf 1_5}&0&0&1&{\bf 1}&1&0&1&1&1&1&0\\
{\bf 1}&0&0&0&0&0&0&{\bf 1}&0&{\bf 1}&0&{\bf 1}&1&0&0&0&0&1&{\bf 1}&1&0&1&1&1&1&0\\
\end{array}
$$
Here the bold $1$'s correspond to the $5$-generators of $\Delta$ and the subscripts are the corresponding values of $\om,$ prescribing the order of jumps. Note that on the first step one ``jumps over'' $2$ elements, on the second -- over $3$ elements, then $0,\ 5,$ and $3$ elements correspondingly. Therefore, the total jump is $13.$ See the corresponding standard $7$-ribbon tableau in Figure \ref{Figure:spin and dinv}.
\end{example}

Let $\om_0\in\aSmn(\Delta)$ be the unique element of $\aSmn(\Delta)$ satisfying 
$$
\om_0^{-1}(1)<\om_0^{-1}(2)<\ldots<\om_0^{-1}(n).
$$ 
It follows that the ribbon tableau $T(\om_0)$ is the tableau corresponding to the increasing order on the jumps. In other words, $\nu$ is always a vertical $m$-ribbon strip, and the underlying tiling of $T(\om)$ is the official tiling of $\nu.$ It turns out that the $\dinv$ and $\spin$ statistics are closely related:

\begin{lemma}\label{Lemma:dinv vs jump}
On has the following formula:
$$
\delta-\dinv(\om)=\frac{1}{2}\left(\spin T(\om)+\spin T(\om_0)\right).
$$
where $\delta:=\frac{(m-1)(n-1)}{2}.$
%where $T_{\min}\in\SRT(\nu,m)$ is the $m$-ribbon standard tableau with the minimal possible spin. 
\end{lemma}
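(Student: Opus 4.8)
The plan is to evaluate both sides directly in terms of the sequence of $m$-jumps that turns $\Delta=M(\lambda)$ into $M=M(\mu)$, via Proposition~\ref{prop:jump vs hgt}. First I would fix notation: let $g_1<g_2<\cdots<g_n$ be the $n$-generators of $\Delta$ listed in increasing order, so that $G:=\{g_1,\dots,g_n\}$ contains exactly one element of each residue class modulo $n$. Recall that $T(\om)$ is produced by moving $\om^{-1}(1)\mapsto\om^{-1}(1)-m$, then $\om^{-1}(2)\mapsto\om^{-1}(2)-m$, and so on, so the generator $g$ is moved at step $\om(g)$ and $g_i$ precedes $g_j$ exactly when $\om(g_i)<\om(g_j)$. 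By Proposition~\ref{prop:jump vs hgt}, $\spin T(\om)=\sum_{g\in G}s(g)$, where $s(g)$ is the number of elements of the current subset lying strictly between $g-m$ and $g$ at the moment $g$ moves, and that subset is $(\Delta+n)\sqcup\{g'-m:\om(g')<\om(g)\}\sqcup\{g':\om(g')\ge\om(g)\}$.

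Next I would carry out the bookkeeping of these jumped-over elements. Setting $c_i:=\#\{y\in\Delta+n:\ g_i-m<y<g_i\}$, which depends only on $\Delta$, a short case check of the displayed subset shows that the only contributions to $\spin T(\om)$ beyond $\sum_i c_i$ come from \emph{close pairs} $g_i<g_j$ with $g_j-g_i<m$: such a pair contributes $2$ when $g_j$ is moved before $g_i$ (then $g_i\in(g_j-m,g_j)$ is jumped over as $g_j$ moves, and $g_j-m\in(g_i-m,g_i)$ is jumped over as $g_i$ moves) and $0$ otherwise. Thus $\spin T(\om)=\sum_i c_i+2\,\mathrm{inv}(\om)$, where $\mathrm{inv}(\om)$ is the number of close pairs $g_i<g_j$ with $\om(g_j)<\om(g_i)$. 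For $\om_0$, whose jumps occur in increasing order of position, no close pair is out of order, so $\spin T(\om_0)=\sum_i c_i$, and hence $\tfrac12\bigl(\spin T(\om)+\spin T(\om_0)\bigr)=\sum_i c_i+\mathrm{inv}(\om)$.

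Then I would identify $\delta-\dinv(\om)$ with the same number. By the definition of $\dinv$ together with the Remark and Example~\ref{Example: quasi-symm}, $\delta-\dinv(\om)$ equals the number of pairs $(i,j)$ with $1\le i\le n$, $i<j<i+m$ and $\om(i)>\om(j)$; by the relation $\om(x+n)=\om(x)+n$ this is the number of $n$-translation orbits of inversions of $\om$ of height at most $m-1$. Since $G$ also contains exactly one element of each residue class modulo $n$, that count can instead be taken over the representative of each orbit whose larger coordinate lies in $G$, i.e. $\sum_{i=1}^{n}\#\{p:\ g_i-m<p<g_i,\ \om(p)>\om(g_i)\}$. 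For fixed $i$ I would split the range of $p$: if $p\notin\Delta$ then $\om(p)\le 0<\om(g_i)$ and $p$ does not count; if $p\in\Delta+n$ then $\om(p)>n\ge\om(g_i)$ and $p$ always counts, contributing $c_i$ in total; if $p=g_\ell\in G$ then $p$ counts iff $\om(g_\ell)>\om(g_i)$, i.e. iff $(\ell,i)$ is a close pair moved out of order. Summing over $i$ yields $\sum_i c_i+\mathrm{inv}(\om)$, matching the previous paragraph and proving the lemma.

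The step I expect to be the main obstacle is the middle one: recognizing that a single close pair of generators feeds the spin of \emph{two} distinct ribbons — once when the smaller generator moves and once when the larger one does — which is exactly what accounts for the $\tfrac12$ in the statement. The remaining ingredients (the application of Proposition~\ref{prop:jump vs hgt} and the residue-class reindexing of the inversion count) are routine, although the case analysis determining which elements get jumped over must be done carefully.
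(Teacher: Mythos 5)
Your proof is correct and follows essentially the same approach as the paper's. You track, for each generator's $m$-jump, the elements it passes over, split by whether they lie in $\Delta+n$ (your $c_i$, the paper's $N_3$) or are other generators at their old or new positions; your observation that each out-of-order close pair of generators contributes $2$ to $\spin T(\om)$ is exactly the paper's identity $N_1(\om)=N_2(\om)$, and your identification of $\spin T(\om_0)=\sum_i c_i$ and $\delta-\dinv(\om)=\sum_i c_i+\mathrm{inv}(\om)$ matches the paper's $\spin T(\om_0)=N_3$ and $\dinv=\delta-N_2-N_3$. The only difference is expository: you carry out in detail the reindexing-by-residue-classes step that identifies $\delta-\dinv$ with an inversion count over generators, which the paper states without elaboration.
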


%\begin{remark}
%This Lemma is similar to Lemma 5.2.2 in \cite{HHLRU}, but we would like to present a shorter proof for the reader's convenience. We also compute the constant shift explicitly.
%\end{remark}

\begin{proof}
Let us count how many elements we ``jump over'' as we construct $M$ from $\Delta$ according to the order prescribed by $\om.$  As we move $\om^{-1}(a)$ to $\om^{-1}(a)-m$ it jumps over the elements of the following three types:
\begin{enumerate}
\item $\om^{-1}(b)-m$ for $1\le b<a$ such that $0<\om^{-1}(b)-\om^{-1}(a)<m$ % (so $\om_f(a)-m<\om_f(b)-m<\om_f(a)$)\\
\item $\om^{-1}(b)$ for $a<b\le n$ such that $0<\om^{-1}(a)-\om^{-1}(b)<m$ %(so $\om_f(a)-m<\om_f(b)<\om_f(a)$)\\
\item $k\in\Delta+n,$ such that $0<\om^{-1}(a)-k<m$ %(so $\om_f(a)-m<\om_f(\beta)<\om_f(a)$
\end{enumerate}
Let $N_{1}(\om),N_{2}(\om)$ and $N_3(\om)$ denote the number of pairs $(a,b)$ satisfying (1)--(3), respectively. 
Note that $N_1(\om)=N_2(\om)$ and 
$$
\dinv(\om)=\delta-N_2(\om)-N_3(\om).
$$ 
Note also that $N_1(\om_0)=N_2(\om_0)=0$ and $N_3(\om)=N_3(\om_0)$ for all $\om\in\aSmn(\Delta).$ So, we get
$$
\spin(T(\om))=N_{1}(\om)+N_{2}(\om)+N_3(\om)=2N_{2}(\om)+N_3(\om),
$$
and
$$
\spin(T(\om_0))=N_3(\om_0)=N_3(\om).
$$
We conclude that 
$$
\delta-\dinv(\om)=N_2(\om)+N_3(\om)=\frac{1}{2}\left(\spin T(\om)+\spin T(\om_0)\right).
$$

\end{proof}

\begin{corollary}\label{Corollary: spin via dinv}
One has the following equations:
$$
\spin T(\om_0)=\delta-\dinv(\om_0),\ \spin T(\om)= \delta+\dinv(\om_0)-2\dinv(\om),
$$
$$
\dinv(\om)=\frac{1}{2}(\delta+\dinv(\om_0)-\spin T(\om)).
$$
\end{corollary}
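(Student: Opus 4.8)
The plan is to deduce the corollary from Lemma~\ref{Lemma:dinv vs jump} by pure algebra, so no new combinatorial work is needed. The key observation is that Lemma~\ref{Lemma:dinv vs jump} can be read as saying that the quantity $\dinv(\om)+\tfrac12\spin T(\om)$ is \emph{independent of} $\om$ within $\aSmn(\Delta)$ (it equals $\delta-\tfrac12\spin T(\om_0)$, which only depends on $\Delta$). So the whole corollary amounts to pinning down this constant and then rearranging.

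Concretely, I would first specialize the identity
$\delta-\dinv(\om)=\tfrac12\bigl(\spin T(\om)+\spin T(\om_0)\bigr)$
to $\om=\om_0$. The two spin terms coincide there, so the right-hand side collapses to $\spin T(\om_0)$, giving the first equality $\spin T(\om_0)=\delta-\dinv(\om_0)$. (Here I should note that $\om_0$ is a legitimate input: by definition $\om_0\in\aSmn(\Delta)$, being the unique element with $\om_0^{-1}(1)<\cdots<\om_0^{-1}(n)$.) Substituting this expression for $\spin T(\om_0)$ back into the general form of the lemma yields the linear relation $\delta-\dinv(\om)=\tfrac12\bigl(\spin T(\om)+\delta-\dinv(\om_0)\bigr)$; solving it for $\spin T(\om)$ gives $\spin T(\om)=\delta+\dinv(\om_0)-2\dinv(\om)$, the second equality, and solving the same relation for $\dinv(\om)$ instead gives $\dinv(\om)=\tfrac12\bigl(\delta+\dinv(\om_0)-\spin T(\om)\bigr)$, the third.

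There is essentially no obstacle here; the only sanity check worth recording is that the $\tfrac12$ in the last formula makes sense, i.e. that $\delta+\dinv(\om_0)-\spin T(\om)$ is always even — but this is automatic, since by the second equality it equals $2\dinv(\om)$. If desired, one could additionally remark that the first equality identifies $\spin T(\om_0)$ as the minimal spin among all standard $m$-ribbon tableaux of shape $\nu$ (consistent with the normalization discussed after the definition of $\spin$), since $\dinv(\om)\ge 0$ forces $\spin T(\om)\ge\spin T(\om_0)$ for every $\om\in\aSmn(\Delta)$; but this is not needed for the stated identities.
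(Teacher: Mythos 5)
Your proof is correct and takes the only sensible route: specialize Lemma~\ref{Lemma:dinv vs jump} to $\om=\om_0$ (where the two spin terms coincide) to pin down $\spin T(\om_0)=\delta-\dinv(\om_0)$, then substitute back and rearrange. The paper gives no explicit proof of the corollary, treating it as immediate from the lemma, and your argument is precisely the algebra being left to the reader.
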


\begin{figure}
\begin{tikzpicture}[scale=0.5]
\filldraw[fill=gray,thick] (0,0)--(7,0)--(7,1)--(3,1)--(3,2)--(2,2)--(2,3)--(1,3)--(1,7)--(0,7)--(0,0);
%\draw [dashed] (0,1)--(7,1);
%\draw [dashed] (0,2)--(7,2);
%\draw [dashed] (0,3)--(7,3);
%\draw [dashed] (0,4)--(6,4);
%\draw [dashed] (0,5)--(6,5);
%\draw [dashed] (0,6)--(2,6);
%\draw [dashed] (0,7)--(2,7);
%\draw [dashed] (0,8)--(2,8);
%\draw [dashed] (0,9)--(1,9);
%\draw [dashed] (0,10)--(1,10);
%\draw [dashed] (0,11)--(1,11);
%\draw [dashed] (0,12)--(1,12);
%\draw [dashed] (1,0)--(1,8);
%\draw [dashed] (2,0)--(2,8);
%\draw [dashed] (3,0)--(3,5);
%\draw [dashed] (4,0)--(4,5);
%\draw [dashed] (5,0)--(5,5);
%\draw [dashed] (6,0)--(6,5);
%\draw [dashed] (7,0)--(7,3);
%\draw [dashed] (8,0)--(8,2);
%\draw [dashed] (9,0)--(9,1);
%\draw [dashed] (10,0)--(10,1);
%\draw [dashed] (11,0)--(11,1);
%\draw [dashed] (12,0)--(12,1);
%\draw [dashed] (13,0)--(13,1);
\draw [thick] (7,0)--(7,1)--(14,1)--(14,0)--(7,0);
\draw [thick] (0,7)--(0,12)--(1,12)--(1,8)--(2,8)--(2,6)--(1,6)--(1,7);
\draw [thick] (1,6)--(2,6)--(2,5)--(4,5)--(4,4)--(5,4)--(5,2)--(4,2)--(4,3)--(3,3)--(3,4)--(1,4);
\draw [thick] (5,2)--(6,2)--(6,1);
\draw [thick] (4,5)--(6,5)--(6,3)--(7,3)--(7,2)--(8,2)--(8,1);
\draw (2.5,2.5) node {$1$};
\draw (3.5,4.5) node {$2$};
\draw (10.5,0.5) node {$3$};
\draw (0.5,9.5) node {$4$};
\draw (5.5,2.5) node {$5$};
\draw (5,-0.5) node {$\spin(T(\om))=13$};
\end{tikzpicture}\quad
\begin{tikzpicture}[scale=0.5]
\filldraw[fill=gray,thick] (0,0)--(7,0)--(7,1)--(3,1)--(3,2)--(2,2)--(2,3)--(1,3)--(1,7)--(0,7)--(0,0);
%\draw [dashed] (0,1)--(7,1);
%\draw [dashed] (0,2)--(7,2);
%\draw [dashed] (0,3)--(7,3);
%\draw [dashed] (0,4)--(6,4);
%\draw [dashed] (0,5)--(6,5);
%\draw [dashed] (0,6)--(2,6);
%\draw [dashed] (0,7)--(2,7);
%\draw [dashed] (0,8)--(2,8);
%\draw [dashed] (0,9)--(1,9);
%\draw [dashed] (0,10)--(1,10);
%\draw [dashed] (0,11)--(1,11);
%\draw [dashed] (0,12)--(1,12);
%\draw [dashed] (1,0)--(1,8);
%\draw [dashed] (2,0)--(2,8);
%\draw [dashed] (3,0)--(3,5);
%\draw [dashed] (4,0)--(4,5);
%\draw [dashed] (5,0)--(5,5);
%\draw [dashed] (6,0)--(6,5);
%\draw [dashed] (7,0)--(7,3);
%\draw [dashed] (8,0)--(8,2);
%\draw [dashed] (9,0)--(9,1);
%\draw [dashed] (10,0)--(10,1);
%\draw [dashed] (11,0)--(11,1);
%\draw [dashed] (12,0)--(12,1);
%\draw [dashed] (13,0)--(13,1);

\draw [thick] (7,0)--(7,1)--(14,1)--(14,0)--(7,0);
\draw [thick] (0,7)--(0,12)--(1,12)--(1,8)--(2,8)--(2,6)--(1,6)--(1,7);
\draw [thick]  (2,6)--(2,5)--(6,5)--(6,3);
\draw [thick] (7,2)--(7,3)--(5,3)--(5,4)--(1,4);
\draw [thick]  (2,3)--(4,3)--(4,2)--(8,2)--(8,1);
\draw (10.5,0.5) node {$1$};
\draw (4.5,1.5) node {$2$};
\draw (4.5,3.5) node {$3$};
\draw (3.5,4.5) node {$4$};
\draw (0.5,9.5) node {$5$};
\draw (5,-0.5) node {$\spin(T(\om_0))=9$};
\end{tikzpicture}
\caption{Standard ribbon tableaux for $7$--stable permutations $\om$ (left) and $\om_0$ (right).}
\label{Figure:spin and dinv}
\end{figure}
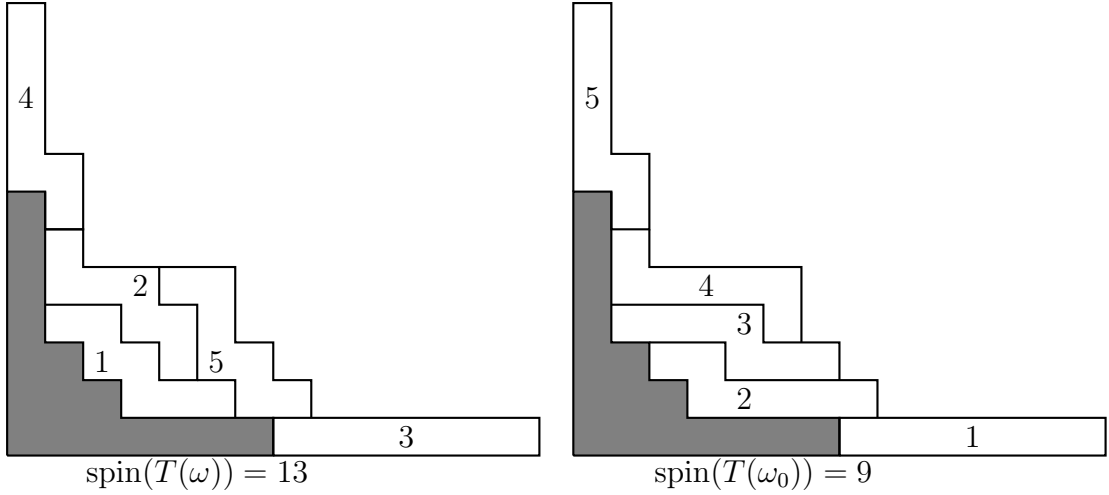

\begin{example}
We illustrate the standard $7$-ribbon tableau corresponding to Example \ref{Example: jumps} in Figure \ref{Figure:spin and dinv} on the left. On the right we have the $7$-ribbon tableau of the same shape with the ribbons ordered with respect to contents. It corresponds to the increasing order of jumps. Observe that $\delta=12,\ \dinv(\om_0)=12-\spin T(\om_0)=3,$ and 
$$
12-\dinv(\om)=\frac{1}{2}(\spin T(\om_0)+\spin T(\om))=11,
$$ 
so $\dinv(\om)=1,$ which matches the answer in Example \ref{Example: quasi-symm}.
\end{example}

%\begin{proof}
%By Lemma \ref{Lemma:jump vs spin} one has
%$$
%\jump(T(\om))=\frac{m-1}{m}|T(\om)|-2\spin(T(\om))=(m-1)n-2\spin(T(\om)).
%$$
%Also, plugging $\om=\om_0$ in Lemma \ref{Lemma:dinv vs jump} one gets
%$$
%\jump(T(\om_0))=\frac{(m-1)(n-1)}{2}-\dinv(\om_0).
%$$
%Finally, plugging the above into Lemma \ref{Lemma:dinv vs jump} yields
%$$
%\frac{(m-1)(n-1)}{2}-\dinv(\om)=\frac{1}{2}\left((m-1)n-2\spin(T(\om))+\frac{(m-1)(n-1)}{2}-\dinv(\om_0)\right)
%$$
%$$
%=\frac{1}{2}\left( \frac{(m-1)(3n-1)}{2}-\dinv(\om_0)\right)-\spin(T(\om)),
%$$
%which is equivalent to 
%$$
%\spin(T(\om))=\dinv(\om)+\frac{1}{2}\left( \frac{(m-1)(3n-1)}{2}-\dinv(\om_0)\right)-\frac{(m-1)(n-1)}{2}
%$$
%$$
%=\dinv(\om)+\frac{1}{2}\left(\frac{(m-1)(n+1)}%{2}-\dinv(\om_0)\right).
%$$
%\end{proof}

\begin{proof}[Proof of Theorem \ref{thm:main}]

We follow the logic of \cite{HHLRU}. Combining equations \eqref{Equation: Fdt via permutations}, \eqref{Equation: bijection perm to srt}, \eqref{Equation: omega inv vs cont} and Corollary \ref{Corollary: spin via dinv}, we get:

%By Lemma \ref{lem:core}, $\Core_m(\mu')=\mu$ and a semistandard tableau of shape $(D+(1^n))\setminus D$ corresponds to a semistandard ribbon tableau of shape $\mu'\setminus\mu$.  Similarly to  [(eq.35)]\cite{HHLRU} we can write
$$
\CF(D;t)=\sum_{f\in \PF_{m/n}(D)}t^{\dinv(f)}\cdot Q_{\ides(f)}(z)=\sum_{\om\in\aSmn(\Delta_D)}t^{\dinv(\om)}Q_{\des(\om^{-1})}(z)
$$
$$
=t^{e(D)}\sum_{T\in\SRT(\nu,m)} t^{-\frac{1}{2}\spin(T)}Q_{\des(c(T))}(z),
$$
where $e(D)=\frac{1}{2}(\delta+\dinv(\om_0)).$ Applying Corollary \ref{Corollary:quasisym via semistand} we get
$$
t^{e(D)}\sum_{T\in\SRT(\nu,m)} t^{-\frac{1}{2}\spin(T)}Q_{\des(c(T))}(z)=t^{e(D)}\sum_{T\in\SSRT(\nu,m)} t^{-\frac{1}{2}\spin(T)}z^T.
$$
Finally, by \cite[Proposition 5.3.1]{HHLRU} one has
$$
(t^{-e(D)}\CF(D,t),s_{\kappa}(z))=P^{-}_{\mu+\rho,\lambda+m\kappa+\rho}(t),
$$
where $P^{-}_{\mu+\rho,\lambda+m\kappa+\rho}(t)$ denotes the parabolic affine Kazhdan-Lusztig polynomial
in the sense of \cite{LT1,LT2}. This polynomial is known to have nonnegative coefficients by the work of Kashiwara-Tanisaki and  Shan \cite{KT,Shan}.
\end{proof}

\section{Example: $m=2,n=5$}
\label{Example: ribbon tableaux 2,5}

\begin{figure}[!ht]
\begin{tikzpicture}[scale=0.7]
\filldraw[fill=gray,thick] (0,0)--(2,0)--(2,1)--(1,1)--(1,2)--(0,2)--(0,0);
\draw[thick] (2,0)--(4,0)--(4,1)--(2,1);
\draw[thick] (3,1)--(3,2)--(1,2);
\draw[thick] (2,2)--(2,3)--(0,3)--(0,2);
\draw[thick] (1,3)--(1,5)--(0,5)--(0,3);
\draw[thick] (1,5)--(1,7)--(0,7)--(0,5);
\draw (3,0.5) node {$1_{-2}$};
\draw (2,1.5) node {$2_0$};
\draw (1,2.5) node {$3_2$};
\draw (0.5,4) node {$4_4$};
\draw (0.5,6) node {$5_6$};
%\draw (3,-0.5) node {$\spin(T)=2,\dinv(\om)=0$};
\draw (2,-0.7) node {$c(T)=(-2,0,2,4,6),\ \des(T)=\emptyset$};
\end{tikzpicture}\quad
\begin{tikzpicture}[scale=0.7]
\filldraw[fill=gray,thick] (0,0)--(1,0)--(1,1)--(0,1)--(0,0);
\draw[thick] (1,0)--(3,0)--(3,1)--(1,1);
\draw[thick] (2,1)--(2,2)--(0,2)--(0,1);
\draw[thick] (2,2)--(2,3)--(0,3)--(0,2);
\draw[thick] (2,3)--(2,4)--(0,4)--(0,3);
\draw[thick] (1,4)--(1,6)--(0,6)--(0,4);
\draw (2,0.5) node {$1_{-1}$};
\draw (1,1.5) node {$2_1$};
\draw (1,2.5) node {$3_2$};
\draw (1,3.5) node {$4_3$};
\draw (0.5,5) node {$5_5$};
%\draw (3,-0.5) node {$\spin(T)=1,\dinv(\om)=1$};
\draw (2,-0.7) node {$c(T)=(-1,1,2,3,5),\ \des(T)=\emptyset$};
\end{tikzpicture}
\caption{On the left: tableau with $\area(\om)=2,\ \spin(T)=\spin(T(\om_0))=2,$ and $\dinv(\om)=0.$ On the right: tableau with $\area(\om)=1,\ \spin(T)=\spin(T(\om_0))=1,$ and $\dinv(\om)=1.$ Subscripts indicate the contents of the ribbons.}\label{Figure: tableaux 20 11}
\end{figure}
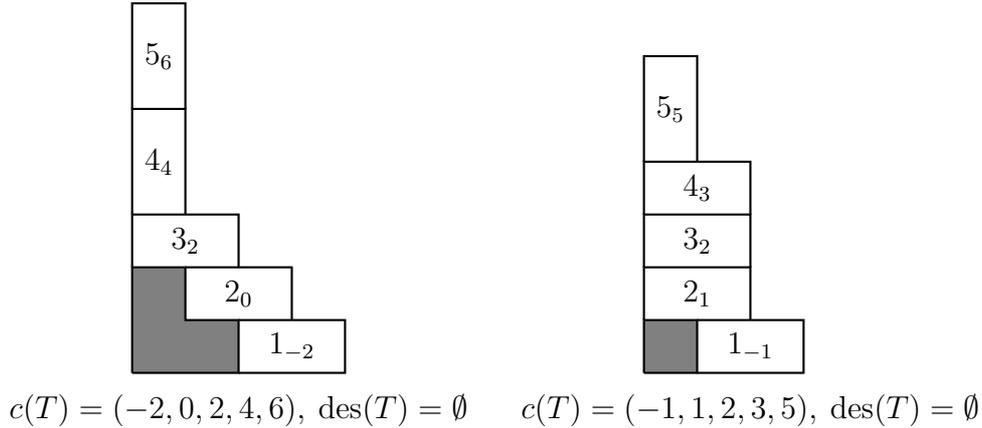

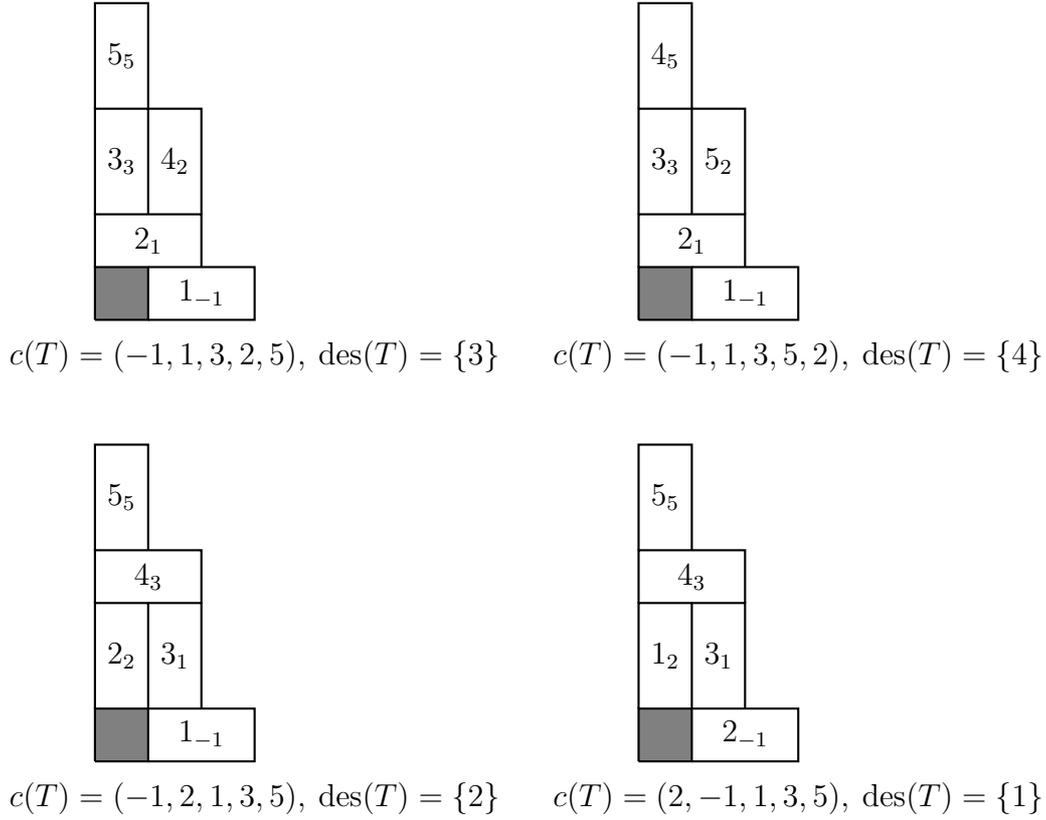
\begin{figure}[!ht]
\begin{tikzpicture}[scale=0.7]
\filldraw[fill=gray,thick] (0,0)--(1,0)--(1,1)--(0,1)--(0,0);
\draw[thick] (1,0)--(3,0)--(3,1)--(1,1);
\draw[thick] (2,1)--(2,2)--(0,2)--(0,1);
\draw[thick] (1,2)--(1,4)--(0,4)--(0,2);
\draw[thick] (2,2)--(2,4)--(1,4);
\draw[thick] (1,4)--(1,6)--(0,6)--(0,4);
\draw (2,0.5) node {$1_{-1}$};
\draw (1,1.5) node {$2_1$};
\draw (0.5,3) node {$3_3$};
\draw (1.5,3) node {$4_2$};
\draw (0.5,5) node {$5_5$};
%\draw (3,-0.5) node {$\spin(T)=3,\dinv(\om)=0$};
\draw (3,-0.7) node {$c(T)=(-1,1,3,2,5),\ \des(T)=\{3\}$};
\end{tikzpicture}\quad
\begin{tikzpicture}[scale=0.7]
\filldraw[fill=gray,thick] (0,0)--(1,0)--(1,1)--(0,1)--(0,0);
\draw[thick] (1,0)--(3,0)--(3,1)--(1,1);
\draw[thick] (2,1)--(2,2)--(0,2)--(0,1);
\draw[thick] (1,2)--(1,4)--(0,4)--(0,2);
\draw[thick] (2,2)--(2,4)--(1,4);
\draw[thick] (1,4)--(1,6)--(0,6)--(0,4);
\draw (2,0.5) node {$1_{-1}$};
\draw (1,1.5) node {$2_1$};
\draw (0.5,3) node {$3_3$};
\draw (1.5,3) node {$5_2$};
\draw (0.5,5) node {$4_5$};
%\draw (3,-0.5) node {$\spin(T)=3,\dinv(\om)=0$};
\draw (3,-0.7) node {$c(T)=(-1,1,3,5,2),\ \des(T)=\{4\}$};
\end{tikzpicture}

\vskip 0.3in

\begin{tikzpicture}[scale=0.7]
\filldraw[fill=gray,thick] (0,0)--(1,0)--(1,1)--(0,1)--(0,0);
\draw[thick] (1,0)--(3,0)--(3,1)--(1,1);
\draw[thick] (1,1)--(1,3)--(0,3)--(0,1);
\draw[thick] (2,1)--(2,3)--(1,3);
\draw[thick] (2,3)--(2,4)--(0,4)--(0,3);
\draw[thick] (1,4)--(1,6)--(0,6)--(0,4);
\draw (2,0.5) node {$1_{-1}$};
\draw (0.5,2) node {$2_2$};
\draw (1.5,2) node {$3_1$};
\draw (1,3.5) node {$4_3$};
\draw (0.5,5) node {$5_5$};
%\draw (3,-0.5) node {$\spin(T)=3,\dinv(\om)=0$};
\draw (3,-0.7) node {$c(T)=(-1,2,1,3,5),\ \des(T)=\{2\}$};
\end{tikzpicture}\quad
\begin{tikzpicture}[scale=0.7]
\filldraw[fill=gray,thick] (0,0)--(1,0)--(1,1)--(0,1)--(0,0);
\draw[thick] (1,0)--(3,0)--(3,1)--(1,1);
\draw[thick] (1,1)--(1,3)--(0,3)--(0,1);
\draw[thick] (2,1)--(2,3)--(1,3);
\draw[thick] (2,3)--(2,4)--(0,4)--(0,3);
\draw[thick] (1,4)--(1,6)--(0,6)--(0,4);
\draw (2,0.5) node {$2_{-1}$};
\draw (0.5,2) node {$1_2$};
\draw (1.5,2) node {$3_1$};
\draw (1,3.5) node {$4_3$};
\draw (0.5,5) node {$5_5$};
%\draw (3,-0.5) node {$\spin(T)=3,\dinv(\om)=0$};
\draw (3,-0.7) node {$c(T)=(2,-1,1,3,5),\ \des(T)=\{1\}$};
\end{tikzpicture}\quad
\caption{Tableaux with $\area(\om)=1,\ \spin(T)=3,\ \spin(T(\om_0))=1,$ and $\dinv(\om)=0.$ Subscripts indicate the contents of the ribbons.}
\label{Figure: tableaux 10}
\end{figure}

In \cite{GMV}  all $2$--stable affine permutations in $\widetilde{S}^2_5$ were listed together with their $\dinv$ and $\area$ statistics.
In this section, we show the corresponding 2--ribbon (domino) tableaux. The $\spin$ statistic of a domino tableau just equals the number of vertical dominoes, so it is particularly easy to visualize. We have $\delta=2.$ Figures \ref{Figure: tableaux 20 11}, \ref{Figure: tableaux 10}, \ref{Figure:tableaux 01}, and \ref{Figure:tableaux 00} show all $16$ possible ribbon tableaux. The subscripts indicate the contents of the ribbons.

There is a unique $2/5$--parking function with area $2$, the corresponding Dyck path $D_2$ is empty. The corresponding domino tableau is shown on the left in Figure \ref{Figure: tableaux 20 11} and has $\dinv=0$ and $\des=\emptyset.$ Therefore,
$$
\CF(D_2;t)=Q_\emptyset(z)=\sum\limits_{i_1\le i_2\le i_3\le i_4\le i_5} z_{i_1}z_{i_2}z_{i_3}z_{i_4}z_{i_5}=h_5=s_5.
$$
%where $z_I=z_{i_1}z_{i_2}z_{i_3}z_{i_4}z_{i_5}.$
There are five $2/5$--parking functions with area $1.$ One of them has $\dinv=1$ and $\des=\emptyset$ (see Figure \ref{Figure: tableaux 20 11}), and the rest have $\dinv=0$ and the descent sets $\{1\},\{2\},\{3\},$ and $\{4\}$ correspondingly (see Figure \ref{Figure: tableaux 10}). Therefore,
we have:
$$
\CF(D_1;t)=tQ_\emptyset(z)+Q_{\{1\}}(z)+Q_{\{2\}}(z)+Q_{\{3\}}(z)+Q_{\{4\}}(z)
$$
%$$
%=ts_5+\sum\limits_{i_1< i_2\le i_3\le i_4\le i_5} z_I+\sum\limits_{i_1\le i_2< i_3\le i_4\le i_5} z_I+\sum\limits_{i_1\le i_2\le i_3< i_4\le i_5} z_I+\sum\limits_{i_1\le i_2\le i_3\le i_4< i_5} z_I
%$$
$$
=ts_5+s_{4,1}
$$
Finally, there are ten $2/5$--parking functions with area $0:$ one of them has $\dinv=2$ and $\des=\emptyset,$ four has $\dinv=1$ and descent sets $\{1\},\{2\},\{3\},$ and $\{4\}$ correspondingly (see Figure \ref{Figure:tableaux 01}), and the remaining five have $\dinv=0$ and descent sets $\{1,3\},\{1,4\},\{2,4\},\{2\}$ and $\{3\}$ correspondingly (see Figure \ref{Figure:tableaux 00}). Therefore,
$$
\CF(D_0;t)=t^2Q_\emptyset+t(Q_{\{1\}}+Q_{\{2\}}+Q_{\{3\}}+Q_{\{4\}})+Q_{\{1,3\}}+Q_{\{1,4\}}+Q_{\{2,4\}}+Q_{\{2\}}+Q_{\{3\}}
$$
$$
=t^2s_5+ts_{1,4}+s_{3,2}.
$$
%\begin{remark}
%For the identities between Gessel's quasisymmetric functions and Schur functions see \cite{G}.
%\end{remark}

Finally, we get
$$
\CF_{2/5}(q,t)=(q^2+qt+t^2)s_{5}+(q+t)s_{4,1}+s_{3,2}.
$$

%Note, that to get the characteristic of the parking function module one should simply plug $t=1:$

%$$
%\CF_{\PF_{2/5}}(q)=(q^2+q+1)s_{5}+(q+1)s_{4,1}+s_{3,2}.
%$$

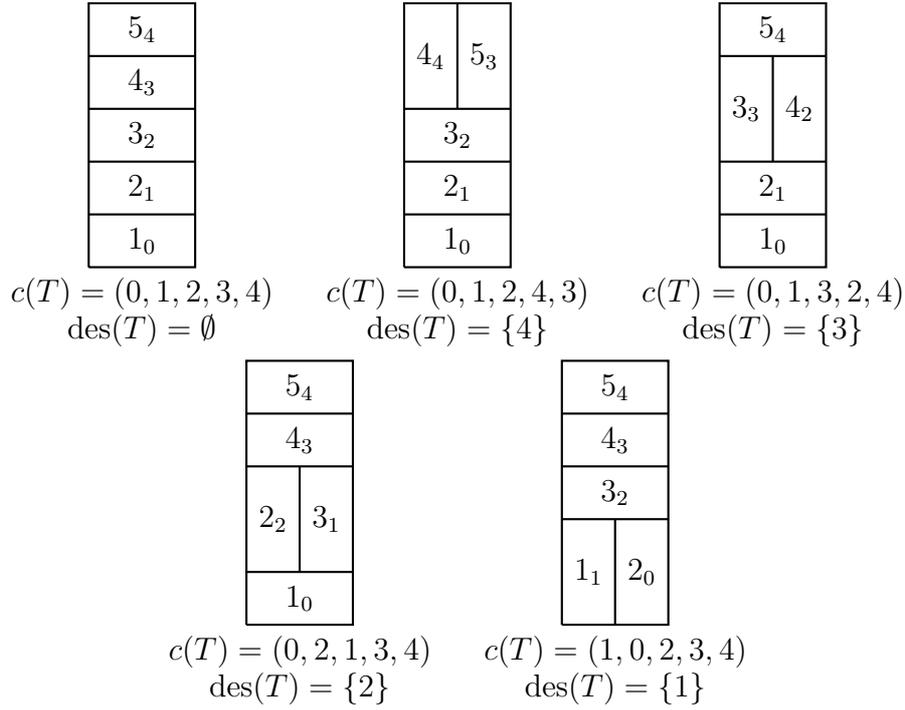
\begin{figure}[!ht]
\begin{tikzpicture}[scale=0.7]
\draw[thick] (0,0)--(2,0)--(2,5)--(0,5)--(0,0);
\draw[thick] (2,1)--(0,1);
\draw[thick] (2,2)--(0,2);
\draw[thick] (2,3)--(0,3);
\draw[thick] (2,4)--(0,4);
\draw (1,0.5) node {$1_0$};
\draw (1,1.5) node {$2_1$};
\draw (1,2.5) node {$3_2$};
\draw (1,3.5) node {$4_3$};
\draw (1,4.5) node {$5_4$};
\draw (1,-0.5) node {$c(T)=(0,1,2,3,4)$};
\draw (1,-1.2) node {$\des(T)=\emptyset$};
\end{tikzpicture}\quad
\begin{tikzpicture}[scale=0.7]
\draw[thick] (0,0)--(2,0)--(2,5)--(0,5)--(0,0);
\draw[thick] (2,1)--(0,1);
\draw[thick] (2,2)--(0,2);
\draw[thick] (2,3)--(0,3);
\draw[thick] (1,3)--(1,5);
\draw (1,0.5) node {$1_0$};
\draw (1,1.5) node {$2_1$};
\draw (1,2.5) node {$3_2$};
\draw (0.5,4) node {$4_4$};
\draw (1.5,4) node {$5_3$};
\draw (1,-0.5) node {$c(T)=(0,1,2,4,3)$};
\draw (1,-1.2) node {$\des(T)=\{4\}$};
\end{tikzpicture}\quad
\begin{tikzpicture}[scale=0.7]
\draw[thick] (0,0)--(2,0)--(2,5)--(0,5)--(0,0);
\draw[thick] (2,1)--(0,1);
\draw[thick] (2,2)--(0,2);
\draw[thick] (1,2)--(1,4);
\draw[thick] (2,4)--(0,4);
\draw (1,0.5) node {$1_0$};
\draw (1,1.5) node {$2_1$};
\draw (0.5,3) node {$3_3$};
\draw (1.5,3) node {$4_2$};
\draw (1,4.5) node {$5_4$};
\draw (1,-0.5) node {$c(T)=(0,1,3,2,4)$};
\draw (1,-1.2) node {$\des(T)=\{3\}$};
\end{tikzpicture}\quad
\begin{tikzpicture}[scale=0.7]
\draw[thick] (0,0)--(2,0)--(2,5)--(0,5)--(0,0);
\draw[thick] (2,1)--(0,1);
\draw[thick] (1,1)--(1,3);
\draw[thick] (2,3)--(0,3);
\draw[thick] (2,4)--(0,4);
\draw (1,0.5) node {$1_0$};
\draw (0.5,2) node {$2_2$};
\draw (1.5,2) node {$3_1$};
\draw (1,3.5) node {$4_3$};
\draw (1,4.5) node {$5_4$};
\draw (1,-0.5) node {$c(T)=(0,2,1,3,4)$};
\draw (1,-1.2) node {$\des(T)=\{2\}$};
\end{tikzpicture}\quad
\begin{tikzpicture}[scale=0.7]
\draw[thick] (0,0)--(2,0)--(2,5)--(0,5)--(0,0);
\draw[thick] (1,0)--(1,2);
\draw[thick] (2,2)--(0,2);
\draw[thick] (2,3)--(0,3);
\draw[thick] (2,4)--(0,4);
\draw (0.5,1) node {$1_1$};
\draw (1.5,1) node {$2_0$};
\draw (1,2.5) node {$3_2$};
\draw (1,3.5) node {$4_3$};
\draw (1,4.5) node {$5_4$};
\draw (1,-0.5) node {$c(T)=(1,0,2,3,4)$};
\draw (1,-1.2) node {$\des(T)=\{1\}$};
\end{tikzpicture}\quad
\caption{On the left: tableau with $\area(\om)=0,\ \spin(T)=\spin(T(\om_0))=0,$ and $\dinv(\om)=2.$ The rest: $4$ tableaux with $\area(\om)=0,\ \spin(T)=2,\ \spin(T(\om))=0,$ and $\dinv(\om)=1.$ Subscripts indicate the contents of the ribbons.}\label{Figure:tableaux 01}
\end{figure}

\begin{figure}[!ht]
\begin{tikzpicture}[scale=0.7]
\draw[thick] (0,0)--(2,0)--(2,5)--(0,5)--(0,0);
\draw[thick] (2,1)--(0,1);
\draw[thick] (1,1)--(1,3);
\draw[thick] (2,3)--(0,3);
\draw[thick] (1,3)--(1,5);
\draw (1,0.5) node {$1_0$};
\draw (0.5,2) node {$2_2$};
\draw (1.5,2) node {$3_1$};
\draw (0.5,4) node {$4_4$};
\draw (1.5,4) node {$5_3$};
\draw (1,-0.5) node {$c(T)=(0,2,1,4,3)$};
\draw (1,-1.2) node {$\des(T)=\{2,4\}$};
\end{tikzpicture}\quad
\begin{tikzpicture}[scale=0.7]
\draw[thick] (0,0)--(2,0)--(2,5)--(0,5)--(0,0);
\draw[thick] (2,1)--(0,1);
\draw[thick] (1,1)--(1,3);
\draw[thick] (2,3)--(0,3);
\draw[thick] (1,3)--(1,5);
\draw (1,0.5) node {$1_0$};
\draw (0.5,2) node {$2_2$};
\draw (1.5,2) node {$4_1$};
\draw (0.5,4) node {$3_4$};
\draw (1.5,4) node {$5_3$};
\draw (1,-0.5) node {$c(T)=(0,2,4,1,3)$};
\draw (1,-1.2) node {$\des(T)=\{3\}$};
\end{tikzpicture}\quad
\begin{tikzpicture}[scale=0.7]
\draw[thick] (0,0)--(2,0)--(2,5)--(0,5)--(0,0);
\draw[thick] (1,0)--(1,2);
\draw[thick] (2,2)--(0,2);
\draw[thick] (2,3)--(0,3);
\draw[thick] (1,3)--(1,5);
\draw (0.5,1) node {$1_1$};
\draw (1.5,1) node {$2_0$};
\draw (1,2.5) node {$3_2$};
\draw (0.5,4) node {$4_4$};
\draw (1.5,4) node {$5_3$};
\draw (1,-0.5) node {$c(T)=(1,0,2,4,3)$};
\draw (1,-1.2) node {$\des(T)=\{1,4\}$};
\end{tikzpicture}\quad
\begin{tikzpicture}[scale=0.7]
\draw[thick] (0,0)--(2,0)--(2,5)--(0,5)--(0,0);
\draw[thick] (1,0)--(1,2);
\draw[thick] (2,2)--(0,2);
\draw[thick] (1,2)--(1,4);
\draw[thick] (2,4)--(0,4);
\draw (0.5,1) node {$1_1$};
\draw (1.5,1) node {$2_0$};
\draw (0.5,3) node {$3_3$};
\draw (1.5,3) node {$4_2$};
\draw (1,4.5) node {$5_4$};
\draw (1,-0.5) node {$c(T)=(1,0,3,2,4)$};
\draw (1,-1.2) node {$\des(T)=\{1,3\}$};
\end{tikzpicture}\quad
\begin{tikzpicture}[scale=0.7]
\draw[thick] (0,0)--(2,0)--(2,5)--(0,5)--(0,0);
\draw[thick] (1,0)--(1,2);
\draw[thick] (2,2)--(0,2);
\draw[thick] (1,2)--(1,4);
\draw[thick] (2,4)--(0,4);
\draw (0.5,1) node {$1_1$};
\draw (1.5,1) node {$3_0$};
\draw (0.5,3) node {$2_3$};
\draw (1.5,3) node {$4_2$};
\draw (1,4.5) node {$5_4$};
\draw (1,-0.5) node {$c(T)=(1,3,0,2,4)$};
\draw (1,-1.2) node {$\des(T)=\{2\}$};
\end{tikzpicture}
\caption{Tableaux with $\area(\om)=0, \spin(T)=4, \ \spin(T(\om_0))=0,$ and $\dinv(\om)=0.$ Subscripts indicate the contents of the ribbons.}\label{Figure:tableaux 00}
\end{figure}

\end{document}